\newtheorem{propo}{{\bf Proposition}}[section]
\newtheorem{coro}[propo]{{\bf Corollary}}
\newtheorem{lemma}[propo]{{\bf Lemma}} \newtheorem{theor}[propo]{{\bf
Theorem}} \newtheorem{ex}{{\sc Example}}[section]
\newtheorem{defn}{\bf Definition}
\newenvironment{proof}{{\bf Proof.}}{$\Box$}
\begin{document}
\vspace*{1.0in}

\begin{center} Nilpotency, Solvability and Frattini Theory for Bicommutative, Assosymmetric and Novikov algebras

\end{center}
\bigskip

\centerline {David A. Towers} \centerline {Department of
Mathematics, Lancaster University} \centerline {Lancaster LA1 4YF,
England}
\bigskip

{\bf Abstract}

\medskip 

This paper starts by showing that, for algebras in a certain class, the concepts of weak nilpotency and nilpotency coincide. It goes on to describe some solvability and nilpotency properties of bicommutative algebras, of assosymmetric algebras and of Novikov algebras and to introduce a Frattini theory for all of them. A description is also given for semisimple bicommutative algebras over any field.

\medskip

\noindent {\it Mathematics Subject Classification 2020:} 0835, 17A01, 17A30, 17A60. \\
\noindent {\it Key Words and Phrases:} Bicommutative algebra, Assosymmetric algebra, Novikov algebra, solvable, nilpotent, semisimple, Frattini ideal.

\section{Introduction}
\begin{defn} A nonassociative algebra $A$ is called {\bf right (respectively, left) commutative} if $(ab)c=(ac)b$ (respectively, $a(bc)=b(ac)$) for all $a,b,c\in A$; it is {\bf bicommutative} if it is both right and left commutative.
\end{defn}

One-sided commutative algebras first appeared in a paper of Cayley in 1857 (\cite{cay}). The study of bicommutative algebras was initiated by Dzhumadil'daev and Tulenbaev in 2003 (\cite{dt1}), where, in particular, they proved that the derived algebra of such an algebra is commutative and associative, and hence that any simple algebra is a field. They were also studied by Burde, Dekimpe and Deschamps, who called them $LR$-algebras, in 2009 (\cite {bdd}). They arise in a number of settings, including the study of affine actions on nilpotent Lie groups. Low dimensional nilpotent bicommutative algebras have been classified algebraically and geometrically in \cite{akk,kpv1,kpv2}.

\begin{defn} A {\bf left-symmetric} algebra $A$ is a vector space with a bilinear product, denoted by juxtaposition, which satisfies
\[ x(yz)-(xy)z=y(xz)-(yx)z.
\] The {\bf associator}, $(x,y,z)=(xy)z-x(yz)$ of any three elements $x,y,z$ in an algebra $A$ can be thought of as measuring the degree of associativity in $A$. Then the defining identity of an  left symmetric algebra can be written as 
\[
(x,y,z)=(y,x,z).
\]  Similarly, $A$ is {\bf right symmetric} if 
\[ (x,y,z)=(x,z,y).
\] We will say that $A$ is {\bf bisymmetric} if it is both left and right symmetric.
\end{defn}

Left symmetric algebras arise in different areas of mathematics and physics (see, for example, \cite{burde}). They have been studied by several authors (see, for example, \cite{ckm}) but a structure theory for such algebras appears to be elusive. However, when further conditions which also arise naturally are added, a nice structure theory can be found.

\begin{defn} Following Kleinfeld in \cite{klein}, the algebra $A$ is called {\bf assosymmetric} if
\[ (x,y,z)=(\sigma(x),\sigma(y),\sigma(z)) \hbox{ for all } x,y,z,\in A,
\] where $\sigma$ is an arbitrary permutation of $x,y,z$.
\end{defn}

Then the following is easy to see, since $S_3$ is generated by $(12)$ and $(23)$..

\begin{propo} The algebra $A$ is assosymmetric if and only it is bisymmetric.
\end{propo}

Assosymmetric algebras are not power associative or flexible, so it was many years before they were investigated in any detail. However, in 1986, Kleinfeld published a new paper (\cite{klein2}) which initiated their study in greater detail.

\begin{defn} A nonassociative algebra $A$ is called a (left) {\bf Novikov algebra} if it is left symmetric and right commutative. There is also an opposite (right) version satisfying right symmetry and left commutativity. 
\end{defn}

These algebras were introduced independently by Gelfand and Dorfman in 1979 as an algebraic approach to the Hamiltonian operator in mechanics (\cite{gd}) and by Balinskii and Novikov in 1985 in relation to hydrodynamics (\cite{bn}). In \cite{bn}, a question was raised about the simple algebras in this class and this was answered  in the same year by Zelmanov who showed that, over a field of characteristic zero, the simple Novikov algebras are fields (\cite{zel}). Low-dimensional Novikov algebras were classified in \cite{bg,kkk}, and a version of Engel's Theorem for such algebras was given in \cite{dt}.
\par

Both of these classes of algebras have attracted the attention of many authors and have a rich theory. They are both Lie admissible algebras; that is, commutator multiplication on them gives a Lie algebra. In certain areas they have some similarities (such as the simple algebras being commutative associative algebras), but in others they differ significantly, as we shall see later. For a fuller bibliography, see {\cite{drensky} for bicommutative algebras and \cite{sz} for Novikov algebras. The purpose of this paper is to study some further solvability and nilpotency properties and to introduce a Frattini theory for them.
\par

In section 2 we consider some concepts of nilpotency in general nonassociative algebras. In particular, we show that, in a class of algebras $\mathfrak{X}$ which is factor algebra and subalgebra closed and, for every $A\in \mathfrak{X}$, $IJ$ is an ideal of $A$ whenever $I,J$ are ideals of $A$, a weaker version of nilpotency is equivalent to nilpotency. It is shown subsequently that all three classes of algebras considered here belong to $\mathfrak{X}$. 
\par

Section 3 is devoted to bicommutative algebras. We show that the Frattini ideal of such an algebra is nilpotent, that minimal ideals are zero algebras or are simple, and produce a decomposition of semisimple such algebras as a direct sum of fields extended by a zero subalgebra (one whose square is zero). It is also shown that, unlike Novikov algebras, the sum of two zero subalgebras need not be solvable; in fact, it can be semisimple. 
\par

In section 4, assosymmetric algebras $A$ are considered. It is shown that, over a field of characteristic different from $2, 3$, $\phi(A)$ is nilpotent. In section 5, Novikov algebras $A$ are introduced. It is shown that, if $A$ is solvable, then $\phi(A)$ is nilpotent. Also, that if $R$ is the solvable radical, then $AR$ is a nilpotent ideal of $A$. 
\par

In the final section, algebras with trivial Frattini ideal are considered. It is shown that, if an algebra $A$ has $\phi(A)$ nilpotent, then $A$ is $\phi$-free if and only if it splits over the sum of its zero ideals. Decomposition results are found for two of these classes of algebras, over a general field for bicommutative algebras, and over a field of characteristic zero for Novikove algebras with nilpotent radical. A consequence of the latter result is that, over a field of characteristic zero, the Frattini ideal of a Novikov agebra is nilpotent. The section finishes with two results concerning maximal subalgebras.
\par

Throughout, $A$ will denote a finite-dimensional nonassociative algebra over a field $F$. Algebra direct sum will be denoted by $\oplus$, whereas direct sum of the vector space structure alone will be denoted by $\dot{+}$. We will use $\subseteq$ to indicate inclusion, and $\subset$ for strict inclusion.

\section{Some General Results}
\begin{defn} If $B$ is a subalgebra of an algebra $A$, the {\bf idealiser} of $B$ in $A$ is $I_A(B)=\{a\in A \mid aB+Ba\subseteq B\}$. The {\bf annihilator} of $I$ in $A$ is $Ann_A(I)=\{a\in A \mid aI=Ia=0\}$. We will write $Ann(A)$ for $Ann_A(A)$.
\end{defn}

\begin{defn} For any algebra $A$, the {\bf derived series} of subalgebras $A^{(0)}\supseteq A^{(1)}\supseteq A^{(2)} \supseteq \ldots$ of $A$
is obtained by defining $A^{(0)}=A$, $A^{(i+1)}=(A^{(i)})^2$ for all $i\ge 0$. $A$ is called {\bf solvable} if $A^{(n)}=0$ for some $n$. Every algebra $A$ has a unique solvable ideal which we will call the {\bf solvable radical} of $A$ and denote by $R(A)$. We will call $A$ {\bf semisimple} if $R(A)=0$. (Note that this is different from the definition used by some authors.)
\end{defn}

\begin{defn} Put $A^1=$ $^1A=A^{[1]}=A$, $A^{n+1}=A^nA$, $^{n+1}A=A(^nA)$, $A^{[n+1]}=\sum_{i+j=n+1}A^{[i]}A^{[j]}$. We call $A$ {\bf right nilpotent} if $A^n=0$, {\bf left nilpotent} if $^nA=0$, {\bf weakly nilpotent} if it is both right and left nilpotent, and {\bf nilpotent} if $A^{[n]}=0$ for some $n\ge 1$.
\end{defn}

Clearly, an algebra is nilpotent if and only if there is an $n$ such that every product of $n$ elements is zero. It follows that $A^{[n]}$ is an ideal of $A$ for every $n\ge 1$.

\begin{propo} Let $A$ be a nilpotent algebra and let $B$ be a subalgebra of $A$. Then $B\subset I_A(B)$; in particular, all maximal subalgebras of $A$ are ideals of $A$.
\end{propo}
\begin{proof} Since $A$ is nilpotent, there is a $k$ such that $A^{[k]}\not \subseteq B$, but $A^{[k+1]}\subseteq B$. Then $B\subset A^{[k]}+B\subseteq I_A(B)$.
\end{proof}

\begin{defn} The {\bf Frattini subalgebra}, $F(A)$, of $A$ is the intersection of the maximal subalgebras of $A$; the {\bf Frattini ideal}, $\phi(A)$, is the largest ideal of $A$ contained in $F(A)$.
\end{defn}

\begin{coro}(\cite[Theorem 6]{gen}) If $A$ is a nilpotent algebra, then $\phi(A)=F(A)=A^2$.
\end{coro}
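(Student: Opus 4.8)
The plan is to deduce both equalities from the fact, just established in the preceding Proposition, that every maximal subalgebra of a nilpotent algebra is an ideal. Since $F(A)$ is by definition the intersection of the maximal subalgebras and each of these is an ideal, $F(A)$ is itself an ideal; being an ideal contained in $F(A)$ it is visibly the largest such, so $\phi(A)=F(A)$ is immediate. The remaining work is therefore to identify $F(A)$ with $A^2$.

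First I would show that $A^2$ is contained in every maximal subalgebra $M$. Since $M$ is an ideal, the quotient $A/M$ is a well-defined algebra, and it is nilpotent because any quotient of a nilpotent algebra is nilpotent (if $A^{[n]}=0$ then $(A/M)^{[n]}=(A^{[n]}+M)/M=0$). By maximality of $M$ the only subalgebras of $A/M$ are $0$ and $A/M$ itself. Now $(A/M)^2$ is a subalgebra, and it cannot equal $A/M$: a nonzero nilpotent algebra $B$ always satisfies $B^2\neq B$, since $B=B^2$ would force $B^{[n]}=B$ for all $n$ by induction (as $B^{[n]}\supseteq B^{[1]}B^{[n-1]}=B^2=B$), contradicting nilpotency. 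Hence $(A/M)^2=0$, which says precisely that $A^2\subseteq M$. Taking the intersection over all maximal $M$ gives $A^2\subseteq F(A)$.

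For the reverse inclusion I would use the correspondence, valid once $A^2$ lies in every maximal subalgebra, between subalgebras of $A$ containing $A^2$ and subspaces of the quotient space $A/A^2$. Because $A^2$ is an ideal, any subspace $M$ with $A^2\subseteq M\subseteq A$ is automatically a subalgebra (indeed an ideal), and such an $M$ is maximal exactly when $M/A^2$ is a hyperplane of $A/A^2$; conversely every maximal subalgebra arises this way by the previous paragraph. Thus the maximal subalgebras correspond bijectively to the hyperplanes of $A/A^2$, and since the intersection of all hyperplanes of a nonzero vector space is zero, $F(A)/A^2=0$, i.e. $F(A)\subseteq A^2$. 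The degenerate cases $A=0$ and $A=A^2$ do not arise for a nonzero nilpotent algebra, by the observation above. Combining the two inclusions yields $F(A)=A^2$, and with the first paragraph $\phi(A)=F(A)=A^2$.

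The step I expect to require the most care is verifying that $A^2$ sits inside every maximal subalgebra, since it is here that nilpotency is genuinely used (via $B^2\neq B$ for nonzero nilpotent $B$) and where one must be careful that $A/M$ really has no nontrivial subalgebras; once this is in hand the hyperplane correspondence is routine linear algebra.
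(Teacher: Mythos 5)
Your proof is correct and complete. The paper gives no argument for this Corollary — it simply cites \cite[Theorem 6]{gen} — and your derivation (maximal subalgebras are ideals by the preceding Proposition, so $\phi(A)=F(A)$; the dichotomy $B^2\neq B$ for a nonzero nilpotent algebra $B$ forces $A^2\subseteq M$ for every maximal subalgebra $M$; and the hyperplane correspondence in $A/A^2$ gives $F(A)\subseteq A^2$) is the standard route that the cited theorem establishes, with all the degenerate cases handled properly.
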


\begin{defn} If $B,C$ are ideals of an algebra $A$ with $B\subset C$, a {\bf chief series} of $A$ from $B$ to $C$ is a series $B=B_0\subset B_1\subset \ldots \subset B_r=C$, where the $B_i$ are ideals of $A$ and $B_{i+1}/B_i$ is a minimal ideal of $A/B_i$ for $0\leq i\leq r-1$. The factor algebras  $B_{i+1}/B_i$ in this series are called {\bf chief factors}.
\end{defn}

We also have the following characterisation of nilpotent algebras.

\begin{lemma}\label{gen} For an algebra $A$, the following are equivalent:
\begin{itemize}
\item[(i)] $A$ is nilpotent;
\item[(ii)] there is a chain of ideals of $A$
\[ 0=A_{(0)} \subset A_{(1)}\subset \ldots \subset A_{(n)}=A,
\] where $\dim A_{(i)}= i$ and $AA_{(i)}+A_{(i)}A\subseteq A_{(i-1)}$.
\end{itemize}
\end{lemma}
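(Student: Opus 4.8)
The plan is to prove the two implications separately, using the lower central series $A=A^{[1]}\supseteq A^{[2]}\supseteq\cdots$ as the bridge. Recall that this is a descending chain of ideals of $A$ satisfying $A^{[i]}A^{[j]}\subseteq A^{[i+j]}$, so in particular $AA^{[k]}+A^{[k]}A\subseteq A^{[k+1]}$, i.e. $A$ annihilates each factor $A^{[k]}/A^{[k+1]}$. For (i)$\Rightarrow$(ii) I would argue as follows. Since $A$ is nilpotent there is an $r$ with $A^{[r]}=0$. The key observation is that any subspace $V$ with $A^{[k+1]}\subseteq V\subseteq A^{[k]}$ is automatically an ideal, because $AV+VA\subseteq AA^{[k]}+A^{[k]}A\subseteq A^{[k+1]}\subseteq V$. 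Hence I can refine the lower central series, inserting subspaces that raise the dimension by exactly one at each step, and every resulting term is an ideal of $A$; this produces a chain $0=A_{(0)}\subset A_{(1)}\subset\cdots\subset A_{(n)}=A$ with $\dim A_{(i)}=i$ and $n=\dim A$. Finally, if $A_{(i)}$ lies in the segment between $A^{[k+1]}$ and $A^{[k]}$, then $A_{(i-1)}\supseteq A^{[k+1]}$ and $AA_{(i)}+A_{(i)}A\subseteq AA^{[k]}+A^{[k]}A\subseteq A^{[k+1]}\subseteq A_{(i-1)}$, which is exactly the required condition.

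For (ii)$\Rightarrow$(i), suppose such a chain exists with $n=\dim A$. I would prove by induction on $k\ge 0$ the statement that $A^{[m]}\subseteq A_{(n-k)}$ for \emph{every} $m\ge 2^k$. The base case $k=0$ is trivial, since $A^{[m]}\subseteq A=A_{(n)}$ for all $m$. For the inductive step, take any $m\ge 2^{k+1}$ and write $A^{[m]}=\sum_{i+j=m}A^{[i]}A^{[j]}$. In each summand at least one of $i,j$ is $\ge 2^k$, since otherwise $i+j\le 2(2^k-1)<2^{k+1}\le m$. If, say, $i\ge 2^k$, then by the inductive hypothesis $A^{[i]}\subseteq A_{(n-k)}$, so $A^{[i]}A^{[j]}\subseteq A_{(n-k)}A\subseteq A_{(n-k-1)}$ by the chain condition (and symmetrically when $j\ge 2^k$). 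Thus $A^{[m]}\subseteq A_{(n-(k+1))}$, completing the induction. Taking $k=n$ gives $A^{[2^n]}\subseteq A_{(0)}=0$, so $A$ is nilpotent.

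The step I expect to be the main obstacle is the quantitative bookkeeping in (ii)$\Rightarrow$(i). The naive guess $A^{[k]}\subseteq A_{(n-k+1)}$ cannot be pushed through the same induction: for a balanced summand $A^{[i]}A^{[j]}$ with $i,j\approx k/2$, neither factor is deep enough in the chain, so one gains only a single step of descent rather than the $k-1$ steps the linear bound would demand. This is precisely where non-associativity intervenes, since, unlike the right-normed powers $A^k$, the set $A^{[k]}$ contains products with arbitrary bracketings. Passing to the doubling scale $2^k$ repairs this, because then one factor is always at least half of the total, and quantifying the inductive claim over all $m\ge 2^k$ (rather than over the single index $2^k$) lets the induction close without separately invoking that the series is descending. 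The resulting exponential nilpotency index $2^n$ is harmless, as (ii) only needs $A^{[m]}=0$ for some $m$.
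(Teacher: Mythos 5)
Your proof is correct. Note first that the paper does not actually prove this lemma: it simply cites \cite[Theorem 3]{gen}, so your argument supplies a self-contained proof where the paper defers to an external reference. Both directions check out. In (i)$\Rightarrow$(ii) the key observation that any subspace squeezed between $A^{[k+1]}$ and $A^{[k]}$ is automatically an ideal (because $AA^{[k]}+A^{[k]}A\subseteq A^{[k+1]}$ by the very definition of $A^{[k+1]}$) is exactly what makes the dimension-by-dimension refinement of the descending chain $A=A^{[1]}\supseteq A^{[2]}\supseteq\cdots\supseteq A^{[r]}=0$ legitimate; your bookkeeping for which $A^{[k+1]}$ lands inside $A_{(i-1)}$ is slightly terse but sound, since one takes $k$ maximal with $A_{(i)}\subseteq A^{[k]}$ and then $A^{[k+1]}$, being a term of the refined chain not containing $A_{(i)}$, must sit inside $A_{(i-1)}$. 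In (ii)$\Rightarrow$(i) your diagnosis of the obstacle is exactly right: the naive linear descent $A^{[k]}\subseteq A_{(n-k+1)}$ fails for balanced bracketings, and the doubling induction, quantified over all $m\ge 2^k$, is the correct repair --- in each summand $A^{[i]}A^{[j]}$ with $i+j=m\ge 2^{k+1}$ at least one factor already lies in $A_{(n-k)}$, and the hypothesis $AA_{(n-k)}+A_{(n-k)}A\subseteq A_{(n-k-1)}$ then pushes the product down one step, yielding $A^{[2^n]}=0$. The exponential index is immaterial for the equivalence. This is in substance the standard argument for nonassociative algebras, and I see no gap.
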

\begin{proof} This is part of \cite[Theorem 3]{gen}.
\end{proof}
\medskip

In general, weakly nilpotent algebras need not be nilpotent, as is shown in \cite{gen}. However, we have the following results.

\begin{lemma}\label{chief}  Let $A$ be an algebra in which $IJ$ is an ideal of $A$ for every pair of ideals $I,J\subseteq A$. Let $N$ be a right (respectively, left) nilpotent ideal of $A$ and let $B/C$ be a chief factor of $A$. Then $BN\subseteq C$ (respectively $NB\subseteq C$).
\end{lemma}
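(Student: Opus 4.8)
The plan is to use minimality of the chief factor $B/C$ to reduce to a dichotomy, and then to rule out the bad alternative using right nilpotency of $N$. Throughout I treat the right nilpotent case; the left case is entirely symmetric, replacing right-normed products by left-normed ones and $BN$ by $NB$.

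First I would observe that the standing hypothesis on $A$ makes $BN$ an ideal of $A$, hence $BN+C$ is an ideal of $A$. Since $B$ and $N$ are ideals, $BN\subseteq BA\subseteq B$, so $C\subseteq BN+C\subseteq B$. Passing to $A/C$, the image $(BN+C)/C$ is an ideal of $A/C$ contained in the minimal ideal $B/C$, so by minimality either $BN+C=C$, which is exactly the desired conclusion $BN\subseteq C$, or $BN+C=B$. The whole task is therefore to derive a contradiction from the assumption $B=BN+C$.

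To do this I would set up the right-normed products $P_1=BN$ and $P_{k+1}=P_kN$, each of which is again an ideal of $A$ by the standing hypothesis. Two elementary facts drive the argument: first, $CN\subseteq CA\subseteq C$; and second, $BN\subseteq AN\subseteq N$ because $N$ is an ideal, which upgrades inductively to $P_k\subseteq N^k$ via $P_{k+1}=P_kN\subseteq N^kN=N^{k+1}$. I would then prove by induction on $k$ that $B=P_k+C$, the base case $k=1$ being the assumption $B=BN+C$. For the inductive step, substituting $B=P_k+C$ into $BN$ gives $BN=P_kN+CN\subseteq P_{k+1}+C$, whence $B=BN+C\subseteq P_{k+1}+C$, while the reverse inclusion is immediate from $P_{k+1}\subseteq P_k\subseteq B$ and $C\subseteq B$.

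Finally I would invoke right nilpotency: if $N^n=0$, then $P_n\subseteq N^n=0$, so the identity $B=P_n+C$ collapses to $B=C$. This contradicts the fact that a chief factor $B/C$ is a nonzero minimal ideal of $A/C$, so the case $BN+C=B$ cannot occur and we conclude $BN\subseteq C$. The main obstacle is the nonassociativity: one cannot simply iterate $B=BN+C$ formally, and the care required lies in choosing the right-normed bracketing $P_k$ so that, on the one hand, the telescoping identity $B=P_k+C$ survives each step, and on the other hand the products land inside $N^k$ so that right nilpotency applies. Everything else is bilinearity together with the ideal-product hypothesis.
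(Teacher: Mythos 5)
Your proposal is correct and follows essentially the same route as the paper: both establish the dichotomy $BN\subseteq C$ or $BN+C=B$ from minimality of $B/C$, and then kill the second alternative by iterating the substitution to get $B\subseteq\bigl(((BN)N)\cdots N\bigr)+C$ and invoking right nilpotency. The paper's version is just terser, leaving implicit the points you spell out (that $CN\subseteq C$ and that the right-normed products land in $N^k$).
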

\begin{proof} Then $BN+C$ is an ideal of $A$ and $C\subseteq BN+C\subseteq B$. Hence $BN\subseteq C$ or $BN+C=B$. Suppose the latter holds. Then
\[ B=BN+C\subseteq (BN)N+C\subseteq ((BN)N)N+C= \ldots =C,
\] since $N$ is right nilpotent. Hence $BN\subseteq C$. Similarly, if $N$ is left nilpotent then $NB\subseteq C$. 
\end{proof}

\begin{theor}\label{nilp} Let $A$ be a weakly nilpotent algebra in which $IJ$ is an ideal of $A$ for every pair of ideals $I,J\subseteq A$. Then $A$ is nilpotent.
\end{theor}
\begin{proof} Let $B/C$ be a chief factor of $A$. Then $BA+AB\subseteq C$, by Lemma \ref{chief}. It follows that $\dim B/C=1$.
\par

Now let
\[ 0=A_{(0)} \subset A_{(1)}\subset \ldots \subset A_{(n)}=A,
\] be a chief series for $A$. Then $A$ is nilpotent by Lemma \ref{gen}.
\end{proof}

\begin{defn} We will call a class of algebras $\mathfrak{X}$ {\bf natural} if it is factor algebra and subalgebra closed and, for every $A\in \mathfrak{X}$, $IJ$ is an ideal of $A$ whenever $I,J$ are ideals of $A$.
\end{defn}

\begin{coro}\label{sub} Let $A\in \mathfrak{X}$, where $\mathfrak{X}$ is a natural class of algebras. Then every weakly nilpotent subalgebra of $A$ is nilpotent.
\end{coro}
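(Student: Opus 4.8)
The plan is to reduce the statement directly to Theorem \ref{nilp} by passing from $A$ to the subalgebra itself. Let $B$ be a weakly nilpotent subalgebra of $A$. Since $\mathfrak{X}$ is a natural class, it is subalgebra closed, and so $B\in\mathfrak{X}$. The crucial observation is that the defining properties of a natural class are required to hold for \emph{every} member of $\mathfrak{X}$; in particular, applying the ideal-product condition to $B$ itself tells us that $IJ$ is an ideal of $B$ whenever $I,J$ are ideals of $B$. This is precisely the hypothesis on the ambient algebra in Theorem \ref{nilp}.

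With that in hand, I would verify that $B$ satisfies both hypotheses of Theorem \ref{nilp}: it is weakly nilpotent by assumption, and it has the property that the product of any two of its ideals is again an ideal, as just noted. Applying Theorem \ref{nilp} with $B$ playing the role of ``$A$'' then yields that $B$ is nilpotent, which is the desired conclusion. No chief-series or nilpotency-length bookkeeping is needed here, since all of that work has already been carried out inside the proof of Theorem \ref{nilp}.

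The only point that requires care — and it is the conceptual heart of the argument rather than a genuine obstacle — is the shift of viewpoint from ``$B$ is a subalgebra of $A$'' to ``$B$ is an algebra in $\mathfrak{X}$ in its own right.'' One must note that the ideal-product hypothesis needed for Theorem \ref{nilp} concerns ideals of $B$, not ideals of $A$; it is the subalgebra-closure of $\mathfrak{X}$, together with the fact that the ideal-product condition is part of what it means to be a natural class, that supplies exactly this. Once this is made explicit the corollary follows immediately.
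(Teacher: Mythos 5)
Your proposal is correct and is exactly the argument the paper intends (the corollary is left without an explicit proof precisely because it follows by applying Theorem \ref{nilp} to the subalgebra $B$, which lies in $\mathfrak{X}$ by subalgebra-closure and hence satisfies the ideal-product hypothesis in its own right). Nothing is missing.
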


\begin{coro}\label{nil}  Let $A\in \mathfrak{X}$, where $\mathfrak{X}$ is a natural class of algebras. Then $A$ has a maximal right nilpotent/ left nilpotent/nilpotent ideal.
\end{coro}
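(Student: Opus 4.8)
The plan is to reduce the statement to a single closure property: that the sum of two ideals of the given type is again an ideal of that type. Once this is established for, say, right nilpotent ideals, the conclusion follows from finite-dimensionality. Indeed, let $S$ be the sum of all right nilpotent ideals of $A$; since $\dim A<\infty$, a spanning set for $S$ lies in finitely many of these ideals, so $S=I_1+\cdots+I_r$ for finitely many right nilpotent ideals, and an induction on the closure property shows $S$ is right nilpotent. By construction $S$ contains every right nilpotent ideal, so $S$ is the desired maximal right nilpotent ideal. The same scheme applies verbatim to the left nilpotent and nilpotent cases.

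The heart of the argument is therefore the sum-of-two step: if $I,J$ are right nilpotent ideals of $A$, then $I+J$ is right nilpotent. I would prove this using a chief series
\[ 0=A_0\subset A_1\subset \cdots \subset A_m=A \]
of $A$, which exists since $A$ is finite-dimensional. Applying Lemma \ref{chief} to each chief factor $A_k/A_{k-1}$, first with $N=I$ and then with $N=J$, gives $A_kI\subseteq A_{k-1}$ and $A_kJ\subseteq A_{k-1}$, whence $A_k(I+J)\subseteq A_{k-1}$ for every $k$. Writing $K=I+J$ and using the right-power recursion $K^{n+1}=K^nK$, a telescoping induction yields $K^{k+1}\subseteq A_{m-k}$, so in particular $K^{m+1}\subseteq A_0=0$. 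The left nilpotent case is identical, using the left half of Lemma \ref{chief}.

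For the nilpotent case I would combine the two previous results. If $I,J$ are nilpotent ideals, then each is both right and left nilpotent, so by the above $I+J$ is both right and left nilpotent, i.e. weakly nilpotent. Since $\mathfrak{X}$ is subalgebra closed, $I+J\in\mathfrak{X}$, and Theorem \ref{nilp} (equivalently Corollary \ref{sub}) then forces $I+J$ to be nilpotent. Hence the class of nilpotent ideals is closed under sums, and the finite-dimensional argument of the first paragraph produces a maximal nilpotent ideal.

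The main obstacle is the sum-of-two step, and everything hinges on having Lemma \ref{chief} available: without the hypothesis that products of ideals are ideals one cannot guarantee that $I+J$ collapses each chief factor, and the telescoping estimate fails. I expect the nilpotent case to require the extra care of invoking subalgebra closure of $\mathfrak{X}$, since the weak nilpotency of $I+J$ only upgrades to nilpotency through Theorem \ref{nilp}, which presupposes that $I+J$ itself lies in a natural class.
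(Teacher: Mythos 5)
Your proposal is correct and follows essentially the same route as the paper: reduce to showing the sum of two right (resp.\ left) nilpotent ideals is again such, telescope powers down a chief series via Lemma \ref{chief}, and upgrade weak nilpotency to nilpotency via Theorem \ref{nilp} for the nilpotent case. The only cosmetic difference is that you run the chief series through all of $A$ while the paper uses a chief series of $N_1+N_2$ itself; both yield the same estimate.
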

\begin{proof} It suffices to show that, if $N_1$ and $N_2$ are two right nilpotent ideals of $A$, then so is $N_1+N_2$. Now, $N_1+N_2$ is certainly a solvable ideal of $A$. Let $0=B_0\subset B_1\subset \ldots \subset B_n=N_1+N_2$ be a chief series for $N_1+N_2$. Then $B_n^2=B_n(N_1+N_2)=B_nN_1+B_nN_2\subseteq B_{n-1}$, by Lemma \ref{chief}. A straightforward induction proof then shows that $B_n^{n+1}=0$. It follows that $N_1+N_2$ is right nilpotent. Left nilpotency follows similarly. If $N_1,N_2$ are nilpotent, then $N_1+N_2$ is weakly nilpotent, and so nilpotent, by Theorem \ref{nilp}.
\end{proof}

\begin{defn} We will call the maximal right nilpotent (respectively, left nilpotent, nilpotent) ideal given in Corollary \ref{nil}, the {\bf right nilradical} (respectively, {\bf left nilradical}, {\bf nilradical}) of $A$, and denote it by $N_r(A)$ (respectively, $N_\ell(A)$, $N(A)$). Then $N(A)\subseteq N_r(A), N_{\ell}(A)\subseteq R(A)$, by \cite[Proposition 3.1]{ckm}.
\end{defn}

\section{Bicommutative algebras}
\begin{propo}\label{ideal} If $A$ is a bicommutative algebra and $I,J$ are ideals of $A$, then $IJ$ is an ideal of $A$.
\end{propo}
\begin{proof} Simply note that $A(IJ)\subseteq I(AJ)\subseteq IJ$ and $(IJ)A\subseteq (IA)J\subseteq IJ$.
\end{proof}
\medskip

Note that it follows from the above Corollary that the class of bicommutative algebras is natural and so has a nilradical. 

\medskip

We will make use of the following result of  Dzhumadil'daev and Tulenbaev which has some useful consequences..

\begin{theor}\label{dt1} (\cite[Theorem 1]{dt1}) Let $A$ be a bicommutative algebra. Then $A^2$ is commutative and associative.
\end{theor}

\begin{coro}\label{c1} If $A$ is a solvable bicommutative algebra, $A^2$ is nilpotent.
\end{coro}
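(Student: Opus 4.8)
The plan is to reduce the statement to the classical fact that a solvable associative algebra is nilpotent, using Theorem \ref{dt1} to supply the associativity. By that theorem, $A^2$ is a commutative and associative algebra, so once I know $A^2$ is solvable as an algebra in its own right, I can hope to upgrade solvability to nilpotency purely by associative considerations. Thus the argument splits into two independent steps: first, that the hypothesis ``$A$ solvable'' passes to ``$A^2$ solvable''; second, that solvable plus associative forces nilpotent.

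For the first step I would show that the derived series of $A^2$ is just a shift of the derived series of $A$, namely $(A^2)^{(i)}=A^{(i+1)}$ for all $i\ge 0$. This is an easy induction on $i$: the base case is $(A^2)^{(0)}=A^2=A^{(1)}$, and the inductive step is $(A^2)^{(i+1)}=\left((A^2)^{(i)}\right)^2=\left(A^{(i+1)}\right)^2=A^{(i+2)}$, directly from the definition of the derived series. Since $A$ is solvable, $A^{(n)}=0$ for some $n$, and hence $(A^2)^{(n-1)}=0$, so $A^2$ is solvable.

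For the second step I would use that in an associative algebra $B$ the terms of the derived series are ordinary powers: $B^{(k)}=B^{2^k}$. This follows by induction from $B^{m}B^{n}=B^{m+n}$ (valid because $B$ is associative), since then $\left(B^{2^k}\right)^2=B^{2^{k+1}}$. Consequently $B^{(k)}=0$ yields $B^{2^k}=0$, which is exactly nilpotency. Applying this to $B=A^2$ completes the proof. The computations here are all routine, and I expect no genuine obstacle; the only point needing care is matching the several notions of power introduced earlier, specifically confirming that for associative $B$ the bracketed power $B^{[m]}$ coincides with the ordinary power $B^{m}$ (all bracketings of a product agree under associativity), so that $B^{2^k}=0$ really is nilpotency in the precise sense used in this paper rather than merely right or weak nilpotency.
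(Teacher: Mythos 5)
Your proof is correct and follows exactly the route the paper intends: the corollary is stated without proof as an immediate consequence of Theorem \ref{dt1}, the point being that $A^2$ inherits solvability from $A$ (via $(A^2)^{(i)}=A^{(i+1)}$) and that a finite-dimensional solvable associative algebra is nilpotent. Your care in checking that $B^{[m]}=B^{m}$ for associative $B$, so that the conclusion is genuine nilpotency in the sense of Definition 2.3, is a worthwhile addition rather than a deviation.
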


\begin{coro} If $A$ is a bicommutative algebra which is either right or left nilpotent, then $A^2$ is nilpotent, and so $A$ is solvable.
\end{coro}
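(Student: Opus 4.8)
The plan is to prove the statement directly in the right nilpotent case and then deduce the left nilpotent case by symmetry, since the opposite algebra $A^{\mathrm{op}}$ of a bicommutative algebra is again bicommutative (right and left commutativity interchange under passage to $A^{\mathrm{op}}$), and right nilpotency of $A$ corresponds to left nilpotency of $A^{\mathrm{op}}$. So I would assume $A^n=0$. The target is to show first that $A^2$ is nilpotent; solvability of $A$ then follows, because $A^{(k+1)}=(A^2)^{(k)}$ for all $k$, so a nilpotent (hence solvable) $A^2$ forces $A^{(s+1)}=0$ for some $s$. The role of Theorem \ref{dt1} is to let me upgrade a \emph{right} nilpotency statement about $A^2$ to genuine nilpotency: since $A^2$ is commutative and associative, right nilpotency of $A^2$ is equivalent to its nilpotency, so it suffices to show that some right power of $A^2$ vanishes.

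First I would record that each right power $A^m$ is an ideal of $A$. This is an immediate induction: $A^1=A$ is an ideal, and if $A^m$ is an ideal then $A^{m+1}=A^mA$ is an ideal by Proposition \ref{ideal} applied with $I=A^m$, $J=A$. The heart of the argument is then the inclusion $(A^2)^k\subseteq A^{k+1}$ for all $k\ge 1$, where $(A^2)^k$ denotes the $k$-th right power of the ideal $A^2$. I would prove this by induction on $k$, the case $k=1$ being trivial. For the inductive step it is enough to show $A^{k+1}A^2\subseteq A^{k+2}$: taking $u\in A^{k+1}$ and a spanning product $vw\in A^2$ with $v,w\in A$, left commutativity gives $u(vw)=v(uw)$, and since $uw\in A^{k+1}A=A^{k+2}$ and $A^{k+2}$ is an ideal, we obtain $v(uw)\in A\cdot A^{k+2}\subseteq A^{k+2}$. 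Combining, $(A^2)^{k+1}=(A^2)^kA^2\subseteq A^{k+1}A^2\subseteq A^{k+2}$.

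With the inclusion in hand, $A^n=0$ gives $(A^2)^{n-1}\subseteq A^n=0$, so $A^2$ is right nilpotent and therefore, being commutative and associative by Theorem \ref{dt1}, nilpotent. Finally, $A^2$ nilpotent implies $A^2$ solvable, and from $A^{(k+1)}=(A^2)^{(k)}$ I conclude that $A$ is solvable, completing the right nilpotent case; the left nilpotent case follows by passing to $A^{\mathrm{op}}$ as noted. I expect the only real obstacle to be the inductive inclusion $(A^2)^k\subseteq A^{k+1}$, specifically the manipulation $A^{k+1}A^2\subseteq A^{k+2}$, which is where the bicommutative identity and the ideal property of the $A^m$ must be combined correctly; everything else is bookkeeping with the two power series.
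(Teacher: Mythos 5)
Your proof is correct. The paper states this as an unproved corollary of Theorem \ref{dt1}, and the shortest route it implicitly invites is the opposite of yours in logical order: for \emph{any} algebra one has $A^{(k)}\subseteq A^{k+1}$ and $A^{(k)}\subseteq {}^{k+1}A$ (immediate induction, no bicommutativity needed), so a one-sided nilpotent algebra is already solvable, and then Corollary \ref{c1} hands you the nilpotency of $A^2$. You instead prove the nilpotency of $A^2$ first, via the inclusion $(A^2)^k\subseteq A^{k+1}$, which does use bicommutativity in two places (Proposition \ref{ideal} to make the right powers $A^m$ ideals, and left commutativity for $u(vw)=v(uw)$), and only then deduce solvability of $A$ from $A^{(k+1)}=(A^2)^{(k)}$; the reduction of the left nilpotent case to the right one via $A^{\mathrm{op}}$ is also sound, since passing to the opposite algebra swaps left and right commutativity and swaps the two power series. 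Both arguments rely on Theorem \ref{dt1} to convert one-sided nilpotency of the commutative associative algebra $A^2$ into genuine nilpotency. What your version buys is a quantitative bound ($(A^2)^{n-1}=0$ when $A^n=0$) and a self-contained derivation that does not route through Corollary \ref{c1}; what it costs is the extra inductive computation, which the general containment $A^{(k)}\subseteq A^{k+1}$ would have let you skip.
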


\begin{coro}\label{ar} Let $A$ be a bicommutative algebra and let R be its solvable radical. Then $RA$ and $AR$ are nilpotent ideals of $A$. 
\end{coro}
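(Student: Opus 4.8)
The plan is to show that $RA$ and $AR$ are simultaneously solvable and commutative-associative, and then to invoke the elementary fact that, for commutative associative algebras, solvability and nilpotency coincide.

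First I would record that $RA$ and $AR$ are ideals of $A$: taking $I=R$ and $J=A$ (both ideals of $A$) in Proposition~\ref{ideal} shows that $RA$ and $AR$ are ideals. Next I would place these ideals inside $A^2$. Each generator $ra$ (respectively $ar$), with $r\in R$ and $a\in A$, lies in $A\cdot A=A^2$, so $RA\subseteq A^2$ and $AR\subseteq A^2$. By Theorem~\ref{dt1}, $A^2$ is commutative and associative, so the identities $xy=yx$ and $(xy)z=x(yz)$ hold for all elements of $A^2$ and in particular on the subalgebras $RA$ and $AR$; thus $RA$ and $AR$ are commutative associative algebras. Finally, since $R$ is solvable and $RA,AR\subseteq R$ are subalgebras, the derived series of each is contained in that of $R$, so $RA$ and $AR$ are solvable.

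The key step is then purely about a commutative associative algebra $B$: associativity gives $(B^2)^2=B^4$ and, inductively, $B^{(i)}=B^{2^i}$, while associativity and commutativity also collapse every bracketing, so $B^{[n]}=B^n$. Hence $B^{(k)}=B^{[2^k]}$, and $B$ is solvable if and only if it is nilpotent. Applying this to $B=RA$ and $B=AR$ completes the proof.

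I expect the main obstacle to be resisting the more direct but harder route. One is tempted to compute with bicommutativity: using the two defining identities one checks that $(RA)^2\subseteq R^2$ and $(AR)^2\subseteq R^2$, and since $R$ is a solvable bicommutative algebra, $R^2$ is nilpotent by Corollary~\ref{c1}, whence $(RA)^2$ and $(AR)^2$ are nilpotent. The trouble is that passing from ``$B^2$ nilpotent'' to ``$B$ nilpotent'' is not automatic for nonassociative algebras, since bracketings such as $B\,B^{[n-1]}$ occurring in $B^{[n]}$ are not controlled by powers of $B^2$ alone. The clean route avoids this entirely by working inside the commutative associative algebra $A^2$, where the gap between solvability and nilpotency disappears; the crucial observation is simply that $RA$ and $AR$ are contained in $A^2$.
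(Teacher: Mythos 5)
Your proof is correct and is precisely the argument the paper intends: Corollary~\ref{ar} is stated without proof as a consequence of Theorem~\ref{dt1}, and your route --- $RA$ and $AR$ are ideals (Proposition~\ref{ideal}) contained both in the solvable ideal $R$ and in the commutative associative algebra $A^2$, hence are solvable commutative associative algebras, hence nilpotent --- is exactly how the gap is meant to be filled, mirroring the proof of Corollary~\ref{c1}. Your closing remark is also well taken: the shortcut of bounding $(RA)^2$ by $R^2$ and citing Corollary~\ref{c1} does not suffice on its own, so working inside $A^2$ where solvability and nilpotency coincide is the right move.
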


\begin{defn} Let $A$ be an algebra and let $a\in A$. Define $\rho_a : A \rightarrow A : x\mapsto xa$. Put $E_A^r(a)=\{x\in A \mid \rho_a^n(x)=0$ for some $n\}$.
Similarly, define $\lambda_a : A \rightarrow A : x\mapsto ax$. Put $E_A^l(a)=\{x\in A \mid \lambda_a^n(x)=0$ for some $n\}$.
\end{defn}

\begin{lemma} If $A$ is a right (respectively, left) commutative algebra and $a\in A$, then $E_A^r(a)$ (respectively, $E_A^l(a)$) is a subalgebra of $A$..
\end{lemma}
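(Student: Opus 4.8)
The plan is to treat the two statements symmetrically and prove only the right-commutative case, the left case being entirely dual: one replaces $\rho_a$ by $\lambda_a$ and the right-commutative law by the left-commutative law $a(bc)=b(ac)$, which yields the mirror identity $\lambda_a^k(yx)=y\,\lambda_a^k(x)$ and hence closure on the other side. So I assume $A$ is right commutative, fix $a\in A$, and write $E=E_A^r(a)$.

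First I would check that $E$ is a linear subspace. This uses nothing about the product beyond linearity of $\rho_a$: if $\rho_a^n(x)=0$ and $\rho_a^m(y)=0$, then $\rho_a^{\max(n,m)}(\lambda x+\mu y)=0$ for any scalars $\lambda,\mu$. Equivalently, since $A$ is finite-dimensional, the chain $\ker\rho_a\subseteq\ker\rho_a^2\subseteq\cdots$ stabilises, so $E=\ker\rho_a^{\dim A}$, the stable kernel of the operator $\rho_a$, which is visibly a subspace.

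The substantive part is closure under the product, and here is where right commutativity enters. The key is the identity
\[ \rho_a^k(xy)=\rho_a^k(x)\,y \quad\text{for all } x,y\in A \text{ and all } k\geq 0, \]
which I would establish by induction on $k$. The base case $k=0$ is trivial, and the inductive step is
\[ \rho_a^{k+1}(xy)=\big(\rho_a^k(x)\,y\big)a=\big(\rho_a^k(x)\,a\big)y=\rho_a^{k+1}(x)\,y, \]
where the middle equality is precisely the right-commutative law $(uv)w=(uw)v$ applied with $u=\rho_a^k(x)$, $v=y$, $w=a$. Granting this identity, if $x\in E$ then $\rho_a^n(x)=0$ for some $n$, whence $\rho_a^n(xy)=\rho_a^n(x)\,y=0$ and so $xy\in E$ for every $y\in A$. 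In particular $E$ is closed under multiplication, hence a subalgebra; the argument in fact gives the stronger statement $EA\subseteq E$ (and dually $AE\subseteq E$ in the left case).

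The main obstacle, such as it is, is simply spotting the commutation identity above; once one sees that right commutativity lets the right multiplication by $a$ slide past $y$ to act on $x$, the result falls out immediately. No restriction on the characteristic is needed, and finite-dimensionality is used only for the clean description of $E$ as a stable kernel.
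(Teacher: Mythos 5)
Your argument is correct and is essentially the paper's own proof: both rest on the identity $\rho_a^n(xy)=(\rho_a^n(x))y$, obtained by repeatedly applying the right-commutative law to slide the right multiplication by $a$ past $y$. You merely make the induction explicit and observe the slightly stronger conclusion $E_A^r(a)A\subseteq E_A^r(a)$, which the paper does not state but which follows from the same computation.
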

\begin{proof} Let $x,y\in E_A^r(a)$. Then there are $m,n$ such that $\rho_a^m(x)=\rho_a^n(y)=0$. Without loss of generality we can assume that $m=n$. But now $\rho_a^n(xy)=(\rho_a^n(x))y=0$, by repeated use of right commutativity, so $xy\in E_A^r(a)$. 
\par

The left version is similar.
\end{proof}

\begin{defn} If $A$ is an algebra, then a subset $B$ is called a {\bf left subideal} (respectively, {\bf right subideal}) of $A$ if there is a chain of subalgebras  $B=B_0\subseteq B_1\subseteq \ldots\subseteq B_r=A$ of $A$ with $B_i$ a left (respectively, right) ideal of $B_{i+1}$ for $0\leq i\leq r-1$.  Let $a\in A$. We call $a$ {\bf right nil} (respectively, {\bf left nil}) if $\rho_a^n(a)=0$ (respectively, $\lambda_a^n(a)=0$) for some $n\geq 0$. We will say that an ideal $B$ of $A$ is right nil (respectively, left nil) if every element of $B$ is right nil (respectively, left nil).
\end{defn}

\begin{theor}\label{factor} Let $B$ be a left (respectively, right) subideal of a right (respectively, left) commutative algebra $A$, and let $C$ be an ideal of $B$ with $C\subseteq \phi(A)$. If $B/C$ is right (respectively, left) nilpotent, then each element of $B$ acts right (respectively, left) nilpotently on $A$; in particular, $B$ is right (respectively, left) nil.
\end{theor}
\begin{proof} Let $b\in B$ and $B=B_0\subseteq B_1\subseteq \ldots\subseteq B_r=A$ be a chain of subalgebras of $A$ with $B_i$ a left ideal of $B_{i+1}$ for $0\leq i\leq r-1$. Then $\rho_b^r(A) \subseteq B$. Since $B/C$ is right nilpotent, there exists $s$ such that $\rho_b^s(B)\subseteq C$. Hence $\rho_b^{r+s}(A)\subseteq \phi(A)$. But $\rho_b^{r+s}(A)+E_A^r(b)=A$, by Fitting's Lemma, so $E_A^r(b)=A$ and each element of $B$ acts right  nilpotently on $A$. 
\par

Again, the left version is similar.
\end{proof}

\begin{coro}\label{phinil} If $A$ is a right (respectively, left) commutative algebra, then $\phi(A)$ is right (respectively, left) nil.
\end{coro}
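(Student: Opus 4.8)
The plan is to deduce this directly from Theorem \ref{factor} by making a degenerate choice of the ideal $C$, so that the nilpotency hypothesis of that theorem becomes vacuous. I will describe the right-commutative case; the left-commutative case is completely symmetric.

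First I would observe that $\phi(A)$ is, by definition, the largest ideal of $A$ contained in $F(A)$, and in particular it is an ideal of $A$, hence a left ideal of $A$. Via the trivial chain $\phi(A)\subseteq A$ it therefore qualifies as a left subideal of $A$ in the sense of the earlier definition. This allows me to take $B=\phi(A)$ as the left subideal appearing in Theorem \ref{factor}.

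The key move is to choose $C=\phi(A)$ itself as the ideal of $B$. Then the requirement $C\subseteq\phi(A)$ holds trivially, $C$ is certainly an ideal of $B=\phi(A)$ (being the whole algebra), and the quotient $B/C=\phi(A)/\phi(A)=0$ is the zero algebra, hence vacuously right nilpotent. In this way every hypothesis of Theorem \ref{factor} is satisfied, without needing any prior information about the nilpotency of $\phi(A)$.

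Applying Theorem \ref{factor} then yields that each element of $B=\phi(A)$ acts right nilpotently on $A$; in particular $\phi(A)$ is right nil, which is exactly the assertion. The left-commutative case runs identically, using that $\phi(A)$ is a right ideal (and hence a right subideal) of $A$ and concluding that $\phi(A)$ is left nil. I do not expect any genuine computational obstacle here: the only point requiring care is recognising that the right nilpotency hypothesis of Theorem \ref{factor} collapses to a triviality under the choice $C=B$, so that the corollary is really a one-line specialisation of the theorem rather than a new argument.
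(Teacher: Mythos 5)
Your proposal is correct and matches the paper's (unstated but clearly intended) derivation: Corollary \ref{phinil} is meant to be the immediate specialisation of Theorem \ref{factor} obtained by taking $B=C=\phi(A)$, noting that $\phi(A)$ is a left (respectively, right) subideal via the one-step chain $\phi(A)\subseteq A$ and that the quotient $B/C=0$ is trivially right (respectively, left) nilpotent. No gap here; the observation that the nilpotency hypothesis collapses when $C=B$ is exactly the point.
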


\begin{coro}\label{biphi} If $A$ is a bicommutative algebra, then $\phi(A)$ is nilpotent.
\end{coro}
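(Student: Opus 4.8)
The plan is to deduce nilpotency of $\phi(A)$ from \emph{weak} nilpotency. Since $A$ is bicommutative, Proposition \ref{ideal} shows that this class is natural, and $\phi(A)$ is a subalgebra of $A$; hence by Corollary \ref{sub} (equivalently Theorem \ref{nilp}) it suffices to prove that $\phi(A)$ is both right and left nilpotent as an algebra. The whole argument thus reduces to upgrading the ``nil'' conclusion of Corollary \ref{phinil} to genuine (right and left) nilpotency of the ideal $\phi(A)$.

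The first step I would take is to pass from an element-wise statement to an operator statement. Applying Theorem \ref{factor} with $B=C=\phi(A)$ is legitimate, since $\phi(A)$, being an ideal, is simultaneously a left and a right subideal of $A$, we have $C=\phi(A)\subseteq\phi(A)$, and $B/C=0$ is trivially right and left nilpotent. Its conclusion is that every $b\in\phi(A)$ acts both right and left nilpotently on the \emph{whole} of $A$; that is, each right multiplication $\rho_b$ and each left multiplication $\lambda_b$ is a nilpotent operator on $A$, not merely that $b$ is right or left nil. This operator-level nilpotency is exactly what ``$\phi(A)$ is right nil'' alone does not give, and obtaining it is the point where the earlier machinery must be invoked rather than the bare corollary.

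Next I would exploit bicommutativity at the operator level: right commutativity $(ab)c=(ac)b$ says precisely that $\rho_b\rho_c=\rho_c\rho_b$ for all $b,c\in A$, and left commutativity gives $\lambda_b\lambda_c=\lambda_c\lambda_b$. Thus $\{\rho_b\mid b\in\phi(A)\}$ is a commuting family of nilpotent operators on the finite-dimensional space $A$; such a family lies in a finite-dimensional commutative associative subalgebra of $\mathrm{End}(A)$ all of whose elements are nilpotent, so it can be simultaneously strictly triangularised. Consequently, taking $N=\dim A$, every product $\rho_{b_N}\cdots\rho_{b_1}$ vanishes. Since right commutativity lets one write the right-normed product of $N+1$ elements of $\phi(A)$ as $(\rho_{b_{N+1}}\cdots\rho_{b_2})(b_1)$, a product of $N$ such operators applied to $b_1$, this yields $\phi(A)^{N+1}=0$, i.e. $\phi(A)$ is right nilpotent; the symmetric argument with the commuting family $\{\lambda_b\}$ gives left nilpotency. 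Hence $\phi(A)$ is weakly nilpotent, and Corollary \ref{sub} finishes the proof. The main obstacle is exactly the passage from pointwise to uniform nilpotency: once Theorem \ref{factor} supplies operator nilpotency and right/left commutativity supplies commutativity of the $\rho_b$ (resp. $\lambda_b$), the standard simultaneous-triangularisation fact converts the individual nilpotencies into a single bound, so the real work is in securing the operator statement rather than the final linear algebra.
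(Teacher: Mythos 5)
Your argument is correct, but it takes a genuinely different route from the paper. The paper's proof is a three-line deduction: by Corollary \ref{phinil} every element of $\phi(A)$ is right and left nil; by \cite[Lemma 2.5]{frat} $\phi(A)\subseteq A^2$; and by Theorem \ref{dt1} (Dzhumadil'daev--Tulenbaev) $A^2$ is commutative and associative, so $\phi(A)$ is a finite-dimensional nil commutative associative algebra and hence nilpotent by the classical Wedderburn fact (no appeal to Theorem \ref{nilp} is even needed there, since for associative algebras all the notions of nilpotency coincide). You instead bypass Theorem \ref{dt1} entirely: you use the full operator-level conclusion of Theorem \ref{factor} (each $\rho_b$, $\lambda_b$ with $b\in\phi(A)$ is nilpotent on all of $A$, which is indeed what its proof delivers, not merely the ``nil'' statement recorded in Corollary \ref{phinil}), observe that right (resp.\ left) commutativity makes the $\rho_b$ (resp.\ $\lambda_b$) a commuting family, simultaneously strictly triangularise, and then pass from weak nilpotency to nilpotency via Theorem \ref{nilp}/Corollary \ref{sub} and Proposition \ref{ideal}. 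All of these steps check out (your application of Theorem \ref{factor} with $B=C=\phi(A)$ is legitimate, and the triangularisation of commuting nilpotent operators works over any field via a common-kernel induction). What your route buys is independence from the structural theorem of \cite{dt1} together with a slightly stronger, quantitative conclusion --- the product of any $\dim A$ right multiplications by elements of $\phi(A)$ annihilates all of $A$, not just $\phi(A)$ --- at the cost of a longer argument and reliance on the general machinery of Section 2; the paper's route is shorter but leans on an imported structure theorem and a classical associative-algebra fact.
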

\begin{proof} This follows from Corollary \ref{phinil}, Theorem \ref{dt1} and the fact that $\phi(A)\subseteq A^2$, by \cite[Lemma 2.5]{frat}.
\end{proof}
\medskip

The following result concerning minimal ideals will be useful.

\begin{theor}\label{min1} Let $B$ be a minimal ideal in a bicommutative algebra $A$. If $R$ is the solvable radical of $A$, then, either $RB=0$ and $BR^2=0$, or $BR=0$ and $R^2B=0$.
\end{theor}
\begin{proof} We have that $RB$ is an ideal of $A$, by Proposition \ref{ideal}, so $RB=0$ or $RB=B$. Similarly, $BR=0$ or $BR=B$. Suppose that $RB=BR=B$. Then
\[ B=RB=R(BR)\subseteq BR^2 \hbox{ and } B=BR=(RB)R\subseteq R^2B.
\]
Suppose that $B\subseteq BR^{(k)}$ and $B\subseteq R^{(k)}B$. Then
\begin{align*}
B & \subseteq R^{(k)}B\subseteq R^{(k)}(BR^{(k)})\subseteq BR^{(k+1)} \hbox{ and }\\
 B & \subseteq BR^{(k)}\subseteq (R^{(k)}B)R^{(k)}\subseteq R^{(k+1)}B.
\end{align*}
Since $R$ is solvable and $B\neq 0$, either $RB=0$ or $BR=0$. If $RB=0$, then $BR^2\subseteq R(BR)=0$. Similarly, $R^2B=0$ if $BR=0$.
\end{proof}

\begin{propo}\label{bimax} Let $A$ be a  right  (respectively, left) nilpotent bicommutative algebra. Then every maximal subalgebra of $A$ is a left (respectively, right) ideal of $A$ and $A^3\subseteq \phi(A)$ (respectively, $^3A\subseteq \phi(A)$).
\end{propo}
\begin{proof} Let $A$ be a right nilpotent bicommutative algebra and let $M$ be a maximal subalgebra of $A$. Then there exists $k$ such that $A^k\not\subseteq M$ but $A^{k+1}\subseteq M$ and $A=M+A^k$. Hence $AM=M^2+A^kM\subseteq M$ and $M$ is a left ideal of $A$. Also, $A^3=A^2A\subseteq A^2M+A^2A^k\subseteq M+ (AA^k)A\subseteq M+A^{k+1}\subseteq M$.
\par

The result for a left nilpotent bicommutative algebra follows similarly.
\end{proof}

\begin{propo}\label{biann} Let $A$ be a bicommutative algebra and let $B$ be a subalgebra of $A$. Then $I_A(B)$ and $Ann_A(B)$ are subalgebras of $A$. If $B$ is an ideal of $A$ then so is $Ann_A(B)$
\end{propo}
\begin{proof} Let $B$ be a subalgebra of $A$ and let $a_1,a_2\in I_A(B)$. Then, for all $b\in B$, $(a_1a_2)b=(a_1b)a_2\in B$ and $b(a_1a_2)=a_1(ba_2)\in B$. The argument for $Ann_A(B)$ is similar.
\par

So now suppose that $B$ is an ideal of $A$ and let $b\in B$, $k\in K=Ann_A(B)$, $a\in A$. Then $(ak)b=(ab)k\in BK=0$, $(ka)b=(kb)a=0$, $b(ak)=a(bk)=0$ and $b(ka)=k(ba)\in KB=0$, whence $Ann_A(B)$ is an ideal of $A$.
\end{proof}
\medskip

The following property of minimal ideals of bicommutative algebras is shared by Novikov algebras.

\begin{theor}\label{simple} If $B$ is a minimal ideal of a bicommutative algebra $A$, then either $B^2=0$ or $B$ is a simple algebra.
\end{theor}
\begin{proof} Assume that $B^2\neq 0$, so $B^2=B$. Then $B\not \subseteq Ann_A(B)$, and so $B\cap Ann_A(B)=0$ by Proposition \ref{biann}. Let $C$ be a proper ideal of $B$. Then 
\[
BC= B^2C\subseteq (BC)B\subseteq CB \hbox{ and } CB=CB^2\subseteq B(CB)\subseteq BC,
\] so $BC=CB$. (Alternatively, simply note that $B,C\subseteq A^2$ which is commutative.) But now
\[
(BC)A\subseteq (BA)C\subseteq BC \hbox{ and } A(BC)=A(CB)\subseteq C(AB)\subseteq CB=BC,
\] so $BC$ is an ideal of $A$. Since $BC\subseteq C\neq B$ we have that $BC=0$. Also $CB=BC=0$, whence $C\subseteq B\cap Ann_A(B)=0$.
\end{proof}
\medskip

We can now give the follwing result, which leads to a characterisation of semisimple bicommutative algebras.

\begin{theor}\label{ss} Let $S$ be a finite-dimensional semisimple bicommutative algebra over any field $F$. Then every ideal of $S$ inside $S^2$ is a direct sum of simple minimal ideals of $S$.
\end{theor}
\begin{proof} Let $B$ be an ideal of $S$ inside $S^2$. We use induction on the length of a chief series of $S$ from $0$ to $B$. If $B$ is a minimal ideal of $S$ then it is simple, by Theorem \ref{simple}. So suppose the result holds when the length of such a chief series is less than $r+1$, and let the length from $0$ to $B$ be $r+1$. Let $C$ be an ideal of $S$ such that $B/C$ is a chief factor of $S$, so $C=S_1\oplus \ldots\oplus S_r$ where $S_i$ is a simple minimal ideal of $S$ for $1\leq i\leq r$.
\par

Let $R(B)$ be the radical of $B$. Then we have that $S_i(S_iR(B))=S_i^2R(B)=S_iR(B)$ and $(S_iR(B))S_i=S_i(R(B)S_i)\subseteq S_iR(B)$, since $S_i, R(B)\subseteq S^2$, which is associative. Thus, $S_iR(B)$ is an ideal of $S_i$. But $S_i$ is simple, so $S_iR(B)=S_i$ or $0$. The former is impossible, as a straightforward induction proof shows that $S_i=S_i(R(B)^{(k)})$ for all $k\geq 1$, whence $S_i=0$. It follows that $S_iR(B)=R(B)S_i=0$, and hence that $R(B)\subseteq Ann_S(C)\cap B$, which is an ideal of $S$, by Proposition \ref{biann}. 
\par

Now, $Ann_S(C)\cap B\subseteq C$ or $B=C+ Ann_S(C)\cap B$. If the former holds, then $R(B)=0$, so $B$ is a semisimple associative algebra, whence $B=C\oplus S_{r+1}$. Moreover, $S_{r+1}\subseteq Ann_S(C)\cap B$, so $B\subseteq C$, a contradiction. Hence $B=C+ Ann_S(C)\cap B$. Let $\sum_{i=1}^r s_i\in C\cap Ann_S(C)$, where $s_i\in S_i$.  Then $s_iS_i=S_is_i=0$, so $Fs_i$ is an ideal of the simple algebra $S_i$, whence $s_i=0$ for each $1\leq i \leq r$. We conclude that $B=C\oplus Ann_S(C)\cap B$ and $Ann_S(C)\cap B$ is a minimal ideal of $S$ and so must be simple.
\end{proof}

\begin{coro}\label{biss} Let $S$ be a finite-dimensional semisimple bicommutative algebra over any field. Then $S=S^2\dot{+} U$ where $S^2=S_1\oplus \ldots \oplus S_n$, the $S_i$ are simple ideals of $S$, $U^2=0$ and either $S_iU=0$ or $US_i=0$ for each $1\leq i \leq n$..
\end{coro}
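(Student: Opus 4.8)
The plan is to read off the decomposition of $S^2$ directly from Theorem~\ref{ss}, and then to manufacture the complement $U$ as the kernel of right multiplication by the identity element of $S^2$, using one-sided commutativity to kill the unwanted products.

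First I would observe that $S^2$ is itself an ideal of $S$ contained in $S^2$, so Theorem~\ref{ss} yields $S^2=S_1\oplus\cdots\oplus S_n$ with each $S_i$ a simple minimal ideal of $S$. Since simple bicommutative algebras are fields (as recorded in the introduction), each $S_i$ is a field; I let $e_i$ be its identity and set $e=e_1+\cdots+e_n$. Distinct minimal ideals satisfy $S_iS_j\subseteq S_i\cap S_j=0$ for $i\neq j$, and by Theorem~\ref{dt1} the algebra $S^2$ is commutative and associative; together these facts show that $e$ is a two-sided identity of $S^2$, i.e.\ $ye=ey=y$ for every $y\in S^2$.

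Next I would examine the $F$-linear map $\rho_e\colon S\to S,\ x\mapsto xe$. Because $Se\subseteq S^2$ and $e$ acts as the identity on $S^2$, we get $(xe)e=xe$, so $\rho_e$ is idempotent with image exactly $S^2$; hence $S=S^2\,\dot{+}\,U$ where $U=\ker\rho_e$. The substance of the proof is then to exploit the two commutativity identities. For $u\in U$ the relation $ue=0$ forces $ue_i=0$ for every $i$, since $ue=\sum_i ue_i$ with $ue_i\in S_i$ and the sum direct. Right commutativity now gives, for all $u,v\in U$ and each $i$, $(uv)e_i=(ue_i)v=0$, so $uv\in U\cap S^2=0$, proving $U^2=0$; and left commutativity gives, for $s\in S_i$, $us=u(se_i)=s(ue_i)=0$, proving $US_i=0$ for every $i$.

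I expect the only delicate point to be verifying that $e$ is a genuine two-sided identity of $S^2$ and hence that $\rho_e$ is a projection onto $S^2$ with complementary kernel; once that is in place, the vanishing of $U^2$ and of each $US_i$ drops out of the right- and left-commutative laws with no further computation. Note that this choice actually produces $US_i=0$ for \emph{every} $i$, which already establishes the second alternative of the claim; symmetrically, taking $U=\ker\lambda_e$ with $\lambda_e\colon x\mapsto ex$ would yield $S_iU=0$ for every $i$, so the stated dichotomy holds in either case.
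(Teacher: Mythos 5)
Your argument is correct, and it takes a genuinely different route from the paper's. The paper obtains the complement abstractly: it invokes the splitting lemma \cite[Lemma 7.1]{frat} to write $S=S^2+U$ with $U\cap S^2\subseteq\phi(U)$, argues that $\phi(U)=0$ so that $U^2\subseteq U\cap S^2=0$, and then for each $i$ separately observes that $US_i$ is an ideal of $S$ contained in $S_i$ (hence $0$ or $S_i$), using right commutativity via $(US_i)U\subseteq U^2S_i=0$ to kill $S_iU$ in the second case. You instead build the complement explicitly as $\ker\rho_e$, where $e$ is the identity of the unital commutative associative ideal $S^2$; the verification that $\rho_e$ is an idempotent projection onto $S^2$ is routine, and then $U^2=0$ and $US_i=0$ both drop out of the right- and left-commutative laws exactly as you compute. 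What each approach buys: yours is self-contained (no appeal to the Frattini splitting machinery), constructive, and yields the sharper uniform conclusion that one of the two alternatives ($US_i=0$, or with $\ker\lambda_e$, $S_iU=0$) can be arranged simultaneously for all $i$ --- the paper's example with $x^2=xy=x$ shows the dichotomy genuinely depends on the choice of complement, and your choice always lands in one branch. The paper's proof is shorter given the cited lemma and applies to any complement produced by the splitting lemma, which fits the Frattini-theoretic theme of the article. One small presentational point: you should note explicitly that $U$ is a subalgebra (immediate once $U^2=0$) and that each $S_i$ is unital because a finite-dimensional simple commutative associative algebra is a field; both are implicit in what you wrote and neither is a gap.
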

\begin{proof} The fact that $S^2=S_1\oplus \ldots \oplus S_n$ follows from Theorem \ref{ss} above. There is a subalgebra $U$ of $S$ such that $S=S^2+U$ and $U\cap S^2\subseteq  \phi(U)$, by \cite[Lemma 7.1]{frat}. But $\phi(U)=0$ as in the proof of Theorem \ref{ss}. Clearly, $U^2\subseteq U\cap S^2=0$. Also $US_i=S_i$ or $0$. If $US_i=S_i$, then $S_iU=(US_i)U\subseteq U^2S_i=0$.
\end{proof}
\medskip

In the above result, $S^2$ is a commutative associative algebra, and so the $S_i$ are extension fields of the base field $F$, but $S$ need not be equal to $S^2$ as the following easy example shows.

\begin{ex} Let $A$ be spanned by $x,y$ where $x^2=x$, $xy=x$ and all other products are zero. It is easy to check that $A$ is a semisimple bicommutative algebra that is neither commutative nor associative.
\end{ex} 

In \cite{sz} it is shown that, for Novikov algebras, the sum of two solvable subalgebras is solvable, and the sum of two zero subalgebras is metabelian. Neither of these results hold for bicommutative algebras, as the above example also shows: note that $A=F(x-y)+Fy$ is its decomposition as a sum of two zero subalgebras.

\section{Assosymmetric algebras}
In this section, let $F$ denote a field of characteristic different from $2, 3$.  Kleinfeld proved the following in \cite{klein}.

\begin{theor}\label{k} Let $R$ be an asosymmetric ring of characteristic different from $2$ and $3$ which has no ideals $I\neq 0$ such that $I^2=0$, then $R$ is associative.
\end{theor}

\begin{coro} Every semisimple assosymmetric algebra over $F$ is associative.
\end{coro}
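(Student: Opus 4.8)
The plan is to deduce this directly from Theorem \ref{k}, so the only real work is verifying that a semisimple assosymmetric algebra satisfies the hypothesis of that theorem: namely, that it contains no nonzero ideal $I$ with $I^2=0$. Since a finite-dimensional algebra over $F$ is in particular a ring of characteristic different from $2$ and $3$, once this hypothesis is checked, Kleinfeld's theorem immediately gives associativity.

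First I would recall that, by the definition of semisimplicity used here, a semisimple algebra $A$ has $R(A)=0$, where $R(A)$ is the solvable radical, i.e.\ the unique maximal solvable ideal. The key observation is then that any ideal $I$ of $A$ with $I^2=0$ is solvable: indeed, $I^{(1)}=I^2=0$, so $I^{(1)}=0$ and $I$ is solvable by the definition of the derived series. Consequently $I$ is a solvable ideal, hence $I\subseteq R(A)=0$, forcing $I=0$.

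This shows that $A$ has no nonzero ideal $I$ with $I^2=0$. Applying Theorem \ref{k} with $R=A$ then yields that $A$ is associative, completing the argument.

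I do not expect any serious obstacle here; the proof is essentially a one-line reduction to Theorem \ref{k}. The only point requiring mild care is the bookkeeping between the notions of solvability and square-zero ideals, and the observation that the finite-dimensional algebra $A$ is a ring of the correct characteristic so that Kleinfeld's ring-theoretic result applies verbatim.
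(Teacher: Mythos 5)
Your proof is correct and is exactly the intended deduction: the paper states this corollary without proof immediately after Theorem \ref{k}, and the implicit argument is precisely yours — a square-zero ideal is solvable, hence lies in $R(A)=0$, so Kleinfeld's hypothesis is satisfied. No issues.
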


\begin{lemma}\label{prod} Let $I,J$ be ideals of the assosymmetric algebra $A$ over any field. Then $IJ$ is an ideal of $A$.
\end{lemma}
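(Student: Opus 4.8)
The plan is to verify directly that $A(IJ)\subseteq IJ$ and $(IJ)A\subseteq IJ$. Since it suffices to treat spanning elements, I would fix $a\in A$, $i\in I$, $j\in J$ and show that both $a(ij)$ and $(ij)a$ lie in $IJ$. The single idea driving everything is that, because $A$ is assosymmetric, the associator $(x,y,z)$ is unchanged by any permutation of its three entries; this lets me shuffle the ambient element $a$ so that it becomes adjacent to exactly one of $i,j$, at which point the ideal properties $AI,IA\subseteq I$ and $AJ,JA\subseteq J$ absorb it.

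For the left multiplication, I would start from the definition $(a,i,j)=(ai)j-a(ij)$ and rearrange to $a(ij)=(ai)j-(a,i,j)$. The term $(ai)j$ is already in $IJ$ because $ai\in AI\subseteq I$. For the associator I would apply the permutation swapping the first two entries to get $(a,i,j)=(i,a,j)=(ia)j-i(aj)$; here $ia\in IA\subseteq I$ gives $(ia)j\in IJ$, and $aj\in AJ\subseteq J$ gives $i(aj)\in IJ$, so $(a,i,j)\in IJ$ and hence $a(ij)\in IJ$.

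For the right multiplication I would argue symmetrically: from $(i,j,a)=(ij)a-i(ja)$ I obtain $(ij)a=i(ja)+(i,j,a)$, where $i(ja)\in IJ$ since $ja\in JA\subseteq J$, and the permutation swapping the last two entries yields $(i,j,a)=(i,a,j)=(ia)j-i(aj)\in IJ$ exactly as above. Combining these shows $(ij)a\in IJ$, completing the verification.

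There is really no deep obstacle here; the only thing to watch is the bookkeeping of which permutation of the associator to invoke. A careless rearrangement (for instance $(a,i,j)=(a,j,i)=(aj)i-a(ji)$) merely reintroduces a term with $a$ sitting outside a product and so need not visibly lie in $IJ$; the correct choice each time is the one that brings $a$ \emph{between} $i$ and $j$, where both resulting bilinear terms stay inside $IJ$. I would also remark that, unlike Theorem \ref{k}, this argument uses no restriction on the characteristic, which is why the statement holds over any field.
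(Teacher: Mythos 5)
Your proof is correct and is essentially the paper's own argument: the paper writes $A(IJ)\subseteq (AI)J+I(AJ)+(IA)J$ by left symmetry and $(IJ)A\subseteq I(JA)+(IA)J+I(AJ)$ by right symmetry, which are exactly the identities $a(ij)=(ai)j-(i,a,j)$ and $(ij)a=i(ja)+(i,a,j)$ that you derive via the associator permutations. Your explicit bookkeeping of which transposition to apply is a helpful expansion of the same computation, not a different route.
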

\begin{proof} We have
\begin{align*}
A(IJ)\subseteq & (AI)J+I(AJ)+(IA)J\subseteq IJ \hbox{ by left symmetry, and} \\
(IJ)A\subseteq & I(JA)+(IA)J+I(AJ)\subseteq IJ \hbox{ by right symmetry}.
\end{align*}
\end{proof}

Then Lemma \ref{prod} shows that the class of assosymmetric algebras is natural. It follows from Corollary \ref{nil} that every assosymmetric algebra has a maximal nilpotent ideal $N(A)$. Now, the following is proved in \cite{pr}.

\begin{theor} Let $A$ be a solvable assosymmetric algebra over $F$. Then $A$ is nilpotent.
\end{theor}

It follows that, for such an algebra $A$, $N(A)=R(A)$, the maximal solvable ideal of $A$ and we have the following result as a Corollary to Theorem \ref{k}.

\begin{coro}\label{fac} If $A$ is an assosymmetric algebra over $F$, then $A/N(A)$ is associative.
\end{coro}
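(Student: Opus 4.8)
The plan is to apply Theorem \ref{k} to the quotient algebra $\bar{A}=A/N(A)$. Since the class of assosymmetric algebras is factor closed and the characteristic is unchanged, $\bar{A}$ is again an assosymmetric algebra over $F$, so it satisfies the hypotheses of Theorem \ref{k} except possibly the requirement that it contain no nonzero ideal of square zero. The entire proof therefore reduces to verifying that $\bar{A}$ has no such ideal; granting this, Theorem \ref{k} yields that $\bar{A}$ is associative, which is exactly the assertion.

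To establish the absence of square-zero ideals I would argue by contradiction. Suppose $\bar{I}$ is a nonzero ideal of $\bar{A}$ with $\bar{I}^2=0$. By the usual ideal correspondence under the canonical projection, $\bar{I}=I/N(A)$ for some ideal $I$ of $A$ with $N(A)\subsetneq I$ and $I^2\subseteq N(A)$. The goal is to show that this forces $I$ to be solvable, and hence to lie inside $N(A)$, contradicting $N(A)\subsetneq I$.

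For the solvability of $I$: by Lemma \ref{prod}, $I^2$ is an ideal of $A$ contained in the nilpotent ideal $N(A)$, and since any subalgebra of a nilpotent algebra is nilpotent, $I^2$ is nilpotent and in particular solvable. As $I/I^2$ is a zero algebra, and solvability is inherited by extensions, $I$ is solvable. Now I invoke the fact recorded just above the statement, namely that $N(A)=R(A)$ for assosymmetric algebras over $F$ (a consequence of the theorem that solvable assosymmetric algebras over $F$ are nilpotent): every solvable ideal of $A$ is contained in the solvable radical $R(A)=N(A)$. Hence $I\subseteq N(A)$, the desired contradiction, and no nonzero square-zero ideal of $\bar{A}$ can exist.

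The reduction itself is the conceptual content; the remaining verifications are routine. The one step where the real work happens is the passage from $I^2\subseteq N(A)$ to $I\subseteq N(A)$: from $I^2\subseteq N(A)$ alone one obtains only that $I^2$ is nilpotent, and it is precisely the equality $N(A)=R(A)$ (equivalently, the coincidence of solvability and nilpotency for these algebras over $F$) that upgrades this to trapping all of $I$ inside $N(A)$. I expect this to be the only point requiring genuine attention.
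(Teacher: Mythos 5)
Your proof is correct and is exactly the argument the paper intends: the corollary is stated without an explicit proof as a consequence of Theorem \ref{k} together with the preceding observation that $N(A)=R(A)$, and your verification that $A/N(A)$ has no nonzero square-zero ideal (via $I^2\subseteq N(A)$ forcing $I$ solvable, hence $I\subseteq R(A)=N(A)$) is precisely the intended reduction. You have also correctly isolated the one step where the Pokrass--Rodabaugh result is genuinely needed.
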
 

\begin{propo}\label{phi} Let $A$ be an assosymmetric algebra over $F$. Then $\phi(A)$ is nilpotent.
\end{propo}
\begin{proof} We have that $(\phi(A)+N(A))/N(A)\subseteq \phi(A/N(A))$, by \cite[Proposition 4.3]{frat}. But $\phi(A/N(A))\subseteq N(A/N(A))$, by \cite[Corollary 6.3]{frat}, so $\phi(A)\subseteq N(A)$, whence the result.
\end{proof}

\section{Novikov algebras}
The following three results were proved in \cite[Lemma 2.1, Theorem 3.3 and Corollary 3.6]{sz}.

\begin{lemma}\label{nproduct} If $I,J$ are ideals of a Novikov algebra $A$, then so is $IJ$. In particular, $A^n$, $^nA$, $A^{(n)}$ and $A^{[n]}$ are ideals of $A$ for all $n\geq 1$. Moreover, the annihilator of $I$ in $A$ is also an ideal of $A$.
\end{lemma}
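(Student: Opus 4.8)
The plan is to derive everything directly from the two defining identities of a (left) Novikov algebra: left symmetry, written in associator form as $(xy)z - x(yz) = (yx)z - y(xz)$, and right commutativity, $(xy)z = (xz)y$. The core assertion is that $IJ$ is an ideal, which I would prove by checking the two inclusions $(IJ)A \subseteq IJ$ and $A(IJ) \subseteq IJ$ separately; all the remaining claims then reduce either to induction or to a short annihilator computation.

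For the first inclusion, right commutativity does the work in one line: for $i \in I$, $j \in J$, $a \in A$ we have $(ij)a = (ia)j$, and since $I$ is an ideal $ia \in I$, so $(ia)j \in IJ$. The second inclusion is the only place where the left-symmetric identity is genuinely needed. Solving that identity for $x(yz)$ gives $x(yz) = (xy)z - (yx)z + y(xz)$, and taking $x = a$, $y = i$, $z = j$ yields $a(ij) = (ai)j - (ia)j + i(aj)$; each of the three terms lies in $IJ$ because $ai, ia \in I$ and $aj \in J$. This is the step I expect to be the mild obstacle, since it is the single inclusion that is not an immediate consequence of one identity applied once; it parallels the left-symmetry computation already used for assosymmetric algebras in Lemma \ref{prod}.

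Once $IJ$ is known to be an ideal, the ``in particular'' statements follow by routine induction on $n$, with base cases $A^1 = {}^{1}A = A^{[1]} = A^{(0)} = A$. Indeed $A^n = A^{n-1}A$ and ${}^{n}A = A({}^{n-1}A)$ are products of two ideals; $A^{(n)} = (A^{(n-1)})^2$ is the product of an ideal with itself; and $A^{[n]} = \sum_{i+j=n} A^{[i]}A^{[j]}$ is a finite sum of products $A^{[i]}A^{[j]}$, each an ideal by the strong inductive hypothesis, and a finite sum of ideals is again an ideal.

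Finally, for $Ann_A(I)$ I would take $k \in Ann_A(I)$, $a \in A$, $i \in I$ and verify that $ak$ and $ka$ annihilate $I$ on both sides, i.e. that the four products $(ak)i$, $i(ak)$, $(ka)i$, $i(ka)$ all vanish. Right commutativity disposes of $(ak)i = (ai)k$ and $(ka)i = (ki)a$ at once, using $ai \in I$ with $Ik = 0$, and $ki \in kI = 0$. The left-symmetric expansion handles the remaining two: $i(ak) = (ia)k - (ai)k + a(ik)$ and $i(ka) = (ik)a - (ki)a + k(ia)$, where every term vanishes because it involves either $Ik = 0$, $kI = 0$, or a product $k(ia)$ with $ia \in I$, hence in $kI = 0$. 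This balances the two identities but introduces no idea beyond the product computation, so no new difficulty arises.
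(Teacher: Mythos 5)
Your proof is correct. Note that the paper does not actually prove this lemma itself: it is quoted from Shestakov and Zhang \cite[Lemma 2.1]{sz}, so there is no internal argument to compare against. Your verification is the complete direct computation one would expect there: right commutativity gives $(ij)a=(ia)j\in IJ$ in one step, the rearranged left-symmetric identity $x(yz)=(xy)z-(yx)z+y(xz)$ handles $a(ij)$, the statements about $A^n$, ${}^nA$, $A^{(n)}$ and $A^{[n]}$ follow by the routine induction you describe (a finite sum of ideals being an ideal), and all four products $(ak)i$, $(ka)i$, $i(ak)$, $i(ka)$ do vanish exactly as you compute. This fills in, self-containedly, the details the paper outsources to the citation.
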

\medskip

It follows from the above Lemma that the class of Novikov algebras is natural.

\begin{theor}\label{nov} Let $A$ be a Novikov algebra. Then the following are equivalent:
\begin{itemize}
\item[(i)] $A$ is right nilpotent;
\item[(ii)] $A^2$ is nilpotent; 
\item[(iii)] $A$ is solvable.
\end{itemize}
\end{theor}

\begin{coro}\label{left} Every left nilpotent Novikov algebra is nilpotent.
\end{coro}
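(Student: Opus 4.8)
The plan is to reduce left nilpotency to the right-handed condition characterised in Theorem \ref{nov} and then invoke the general criterion of Theorem \ref{nilp}. The observation that drives everything is that a nonzero left nilpotent algebra cannot satisfy $A^2=A$: if $A^2=A$, then $^2A=A$, and inductively $^{n+1}A=A(^nA)=A\cdot A=A^2=A$ for all $n$, contradicting $^nA=0$. Hence, whenever $A\neq 0$, we have the strict inclusion $A^2\subset A$, so $\dim A^2<\dim A$. This is exactly the leverage needed to run an induction on dimension.

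First I would set up the induction on $\dim A$, the case $A=0$ (equivalently $A^2=0$) being trivial. For the inductive step, note that $A^2$ is an ideal of $A$ by Lemma \ref{nproduct}, hence a Novikov subalgebra, and it is again left nilpotent: any left-normed product of $n$ elements of $A^2$ is a left-normed product of $n$ elements of $A$, so $^n(A^2)\subseteq {}^nA=0$. Since $\dim A^2<\dim A$, the inductive hypothesis applies to $A^2$ and yields that $A^2$ is nilpotent. With $A^2$ nilpotent, Theorem \ref{nov} (the implication (ii)$\Rightarrow$(i)) gives at once that $A$ is right nilpotent. As $A$ is left nilpotent by assumption, $A$ is therefore weakly nilpotent. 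Finally, the class of Novikov algebras is natural by Lemma \ref{nproduct} (so $IJ$ is an ideal whenever $I,J$ are), and Theorem \ref{nilp} then upgrades weak nilpotency to nilpotency, completing the induction.

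The one genuinely non-formal point, and the step I expect to carry the weight, is the passage from left to right nilpotency. Left nilpotency is the wrong-handed hypothesis relative to Theorem \ref{nov}, which is phrased entirely in terms of right nilpotency, solvability and nilpotency of $A^2$; there is no term-rewriting identity that turns $^nA=0$ into $A^m=0$ directly. The dimension induction circumvents this by handing $A^2$ back to the theorem being proved, so that \emph{its} nilpotency, a two-sided conclusion, can be fed into Theorem \ref{nov} to produce the missing right nilpotency of $A$.

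An alternative to the induction, should one prefer it, is to show directly that $A$ is solvable by passing to $A/R(A)$ and using that minimal ideals of a Novikov algebra are zero algebras or simple (the property noted to be shared with the bicommutative case): a nonzero left nilpotent semisimple Novikov algebra would contain a simple ideal $B=B^2$, which, being a subalgebra of a left nilpotent algebra, would have to be left nilpotent while simultaneously satisfying $^nB=B\neq 0$, a contradiction. Solvability then feeds into Theorem \ref{nov} exactly as before, again reducing to weak nilpotency and Theorem \ref{nilp}.
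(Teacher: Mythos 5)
Your argument is correct, but note that the paper does not actually prove this statement: Corollary \ref{left} is simply quoted from \cite[Corollary 3.6]{sz}, alongside Lemma \ref{nproduct} and Theorem \ref{nov}. What you have supplied is therefore a self-contained derivation from the other imported results together with the paper's Theorem \ref{nilp}, and it goes through: if $A^2=A$ then $^nA=A$ for all $n$, so a nonzero left nilpotent $A$ has $\dim A^2<\dim A$; the ideal $A^2$ is again a left nilpotent Novikov algebra (since $B\subseteq A$ forces $^nB\subseteq {}^nA$), so induction gives $A^2$ nilpotent, Theorem \ref{nov} (ii)$\Rightarrow$(i) gives $A$ right nilpotent, and Theorem \ref{nilp} together with Lemma \ref{nproduct} upgrades weak nilpotency to nilpotency. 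Two small remarks. First, the parenthetical ``the case $A=0$ (equivalently $A^2=0$)'' is a slip: $A^2=0$ does not imply $A=0$, though either serves as a harmless base case. Second, the dimension induction can be avoided entirely: $A^{(1)}={}^2A$ and $A^{(k+1)}=A^{(k)}A^{(k)}\subseteq A({}^{k+1}A)={}^{k+2}A$, so left nilpotency already implies solvability, and Theorem \ref{nov} (iii)$\Rightarrow$(i) plus Theorem \ref{nilp} finish as before. Your alternative route via $A/R(A)$ also works, but it leans on the assertion that minimal ideals of Novikov algebras are zero algebras or simple, which the paper mentions without proof, so the first argument is the one to keep.
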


\begin{coro}\label{novphi} If $A$ is a solvable Novikov algebra, then $\phi(A)$ is nilpotent.
\end{coro}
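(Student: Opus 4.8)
The plan is to exploit Theorem~\ref{nov}, which collapses solvability to a strong form of nilpotency for Novikov algebras. Since $A$ is solvable, the implication (iii)$\Rightarrow$(ii) of that theorem tells me at once that $A^2$ is nilpotent. This is the crucial structural input: unlike the general nonassociative setting, for Novikov algebras solvability already forces the derived algebra to be nilpotent, which is the exact analogue of Corollary~\ref{c1} in the bicommutative case. Everything else is routine, so the real work has already been done in establishing Theorem~\ref{nov}.

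Next I would recall that $\phi(A)\subseteq A^2$ for any finite-dimensional nonassociative algebra, by \cite[Lemma~2.5]{frat}; this is precisely the containment already invoked in the proof of Corollary~\ref{biphi}. Hence $\phi(A)$ is a subalgebra (indeed an ideal) of the nilpotent algebra $A^2$. It then remains only to observe that nilpotency passes to subalgebras: if $(A^2)^{[n]}=0$, then for $\phi(A)$ we have $(\phi(A))^{[n]}\subseteq (A^2)^{[n]}=0$, since every product of $n$ elements of $\phi(A)$ is in particular a product of $n$ elements of $A^2$. Equivalently, one can use the characterisation noted right after the definition of nilpotency, that an algebra is nilpotent iff some fixed-length product of its elements always vanishes, a property clearly inherited by subalgebras. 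Therefore $\phi(A)$ is nilpotent.

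There is essentially no genuine obstacle here once Theorem~\ref{nov} is available; the entire content of the corollary is concentrated in the equivalence (iii)$\Rightarrow$(ii), while the containment $\phi(A)\subseteq A^2$ and the subalgebra-closure of nilpotency are immediate. If I preferred not to cite the subalgebra-closure property directly, I could instead deduce the chain of ideals required by Lemma~\ref{gen} from the nilpotency of $A^2$, but the direct inclusion argument is cleaner and shorter.
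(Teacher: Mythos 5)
Your argument is correct and is exactly the paper's proof: the paper's one-line justification ``this follows from the fact that $\phi(A)\subseteq A^2$'' implicitly invokes Theorem~\ref{nov} (iii)$\Rightarrow$(ii) to get $A^2$ nilpotent, just as you do explicitly. Your added remarks on subalgebra-closure of nilpotency merely spell out what the paper leaves tacit.
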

\begin{proof} This follows from the fact that $\phi(A)\subseteq A^2$.
\end{proof}

\begin{propo}\label{novar} Let $A$ be a Novikov algebra with solvable radical $R$. Then $AR$ is a nilpotent ideal of $A$.
\end{propo}
\begin{proof}  We use induction on the maximum length $k$ of a chief series of $A$ from $0$ to $R$. If $R$ is a minimal ideal of $A$, then $R^2=0$, so $(AR)^2\subseteq R^2=0$ and $^2(AR)\subseteq R^2=0$, so the result holds for $k=1$. So, suppose it holds whenever $k\leq n$ ($n\geq 1$), and let $A$ be such that $k=n+1$.
Let $B$ be a minmal ideal of $A$ with $B\subseteq R$. Then $R/B$ is the solvable radical of $A/B$ and so $\frac{A}{B}\frac{R}{B}$ is nilpotent. Hence, there is an $r$ such that $^r(AR)\subseteq B$. But now $BR=0$, since $R$ is right nilpotent, and so
\[
^{r+1}(AR)\subseteq (AR)B\subseteq (AB)R\subseteq BR=0.
\] Thus, $AR$ is left nilpotent, and so is nilpotent, by Corollary \ref{left}. The result follows by induction.
\end{proof}

\begin{propo}\label{ann} Let $A$ be a Novikov algebra and let $B$ be a subalgebra of $A$. Then $I_A(B)$ and $Ann_A(B)$ are subalgebras of $A$. If $B$ is an ideal of $A$ then so is $Ann_A(B)$
\end{propo}
\begin{proof} Let $B$ be a subalgebra of $A$ and let $a_1,a_2\in I_A(B)$. Then, for all $b\in B$, $(a_1a_2)b=(a_1b)a_2\in B$ and $b(a_1a_2)=(ba_1)a_2+a_1(ba_2)-(a_1b)a_2\in B$. The argument for $Ann_A(B)$ is similar.
\par

So now suppose that $B$ is an ideal of $A$. Then $Ann_A(B)$ is an ideal of $A$ is shown in \cite[Lemma 2.1]{sz}.
\end{proof}
\medskip

The following was proved by Zelmanov in \cite[Proposition 2]{zel}. 

\begin{propo}\label{novss} Let $S$ be a semisimple Novikov algebra over a field of characteristic zero. Then $S$ is a direct sum of fields.
\end{propo}

\section{$\phi$-free algebras}
\begin{defn} An ideal $B$ of an algebra $A$ is called a {\bf zero ideal} if $B^2=0$. The {\bf socle}, denoted $Soc(A)$ (respectively, {\bf zero socle}, denoted $Zsoc(A)$) of $A$ is the sum of the minimal ideals (respectively, minimal zero ideals) of $A$. We will say that $A$ is {\bf $\phi$-free} if $\phi(A)=0$.
\end{defn}

\begin{theor}\label{split}  Let $A$ be an algebra in which $\phi(A)$ is nilpotent. Then $A$ is $\phi$-free if and only if it splits over its zero socle.
\end{theor}
\begin{proof} If $A$ is $\phi$-free, it splits over its zero socle, by  \cite[Lemma 7.2]{frat}. So suppose that $A=Zsoc(A)\dot{+} C$, where $C$ is a subalgebra of $A$ and $Zsoc(A)=Z_1\oplus\ldots\oplus Z_n$, where $Z_i$ is a minimal zero ideal of $A$, and that $\phi(A)\neq 0$. Then there is a minimal ideal $Z$ of $A$ contained in $\phi(A)$. Since $\phi(A)$ is nilpotent, we have that $Z$ is a zero ideal. Thus $\phi(A)\cap Zsoc(A)\neq 0$. Put $M_i=(Z_1\oplus\ldots\oplus \hat{Z_i}\oplus\ldots\oplus Z_n)\dot{+}C$, where $\hat{Z_i}$ indicates a term that is missing, and let $K_i$ be a subalgebra of $A$ containing $M_i$. Then $A=M_i+Z_i=K_i+Z_i$ and $K_i=M_i+K_i\cap Z_i$. But $A(K_i\cap Z_i)=(K_i+Z_i)(K_i\cap Z_i)\subseteq K_i\cap Z_i$, since $Z_i$ is a zero algebra. Similarly, $(K_i\cap Z_i)A\subseteq K_i\cap Z_i$, so $K_i\cap Z_i$ is an ideal of $A$ inside $Z_i$. It follows that $K_i\cap Z_i=Z_i$ or $0$, whence $K_i=A$ or $M_i$. Thus, $M_i$ is a maximal subalgebra of $A$ for each $1\leq i\leq n$. Hence
\[ \phi(A)\subseteq \cap_{i=1}^n M_i \subseteq C,
\] and $\phi(A)\cap Zsoc(A) =0$, a contradiction. It follows that $\phi(A)=0$.
\end{proof}

\begin{coro}\label{csplit} Let $A$ be a bicommutative, assosymmetric over a field of characteristic different from $2, 3$ or solvable Novikov algebra. Then $A$ is $\phi$-free if and only if it splits over its zero socle.
\end{coro}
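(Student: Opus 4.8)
The plan is to recognise that this is an immediate corollary of Theorem \ref{split}, whose single hypothesis is that $\phi(A)$ be nilpotent. So the entire task reduces to verifying, in each of the three classes named in the statement, that $\phi(A)$ is nilpotent, after which Theorem \ref{split} yields the equivalence with splitting over the zero socle.

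First I would dispose of the bicommutative case: here $\phi(A)$ is nilpotent by Corollary \ref{biphi}, which already covers all bicommutative algebras with no extra hypotheses. Next, for an assosymmetric algebra over a field of characteristic different from $2$ and $3$, the nilpotency of $\phi(A)$ is exactly the content of Proposition \ref{phi}, which is why the characteristic restriction appears in the statement (it is inherited from Kleinfeld's Theorem \ref{k} through Corollary \ref{fac} and Proposition \ref{phi}). Finally, for a solvable Novikov algebra, Corollary \ref{novphi} gives that $\phi(A)$ is nilpotent, using $\phi(A)\subseteq A^2$ together with Theorem \ref{nov}.

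Having established that $\phi(A)$ is nilpotent in each of the three cases, I would simply apply Theorem \ref{split} to conclude that $A$ is $\phi$-free if and only if it splits over its zero socle. There is no genuine obstacle to overcome: all the substantive work has been carried out in the three cited results, and the corollary is essentially a packaging of them under the common umbrella of Theorem \ref{split}. The only point requiring a moment's care is to confirm that each cited result applies with exactly the hypotheses stated here, in particular that the characteristic hypothesis is needed only for the assosymmetric case and that the solvability hypothesis is needed only for the Novikov case.
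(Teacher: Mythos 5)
Your proposal is correct and matches the paper's own proof exactly: the paper likewise deduces the corollary from Theorem \ref{split} by citing Corollary \ref{biphi}, Proposition \ref{phi} and Corollary \ref{novphi} to supply the nilpotency of $\phi(A)$ in the three respective cases.
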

\begin{proof} This follows from Theorem \ref{split}, Proposition \ref{phi} and Corollaries \ref{biphi} and \ref{novphi}.
\end{proof}

\begin{theor}\label{t} Let $A$ be a $\phi$-free bicommutative, assosymmetric or Novikov algebra. Then $Zsoc(A)=N(A)=Ann_A(Soc(A))$.
\end{theor}
\begin{proof} Clearly, $Zsoc(A)\subseteq N(A)$. Let $B$ be a minimal ideal of $A$ and let $N$ be a nilpotent ideal of $A$. Then $B\cap N=0$ or $B$. If the former holds, then $BN+NB\subseteq B\cap N=0$, so $N\subseteq Ann_A(B)$. So suppose that the latter holds. Then $B\subseteq N$ and so $B\subseteq Ann_A(N)$, by Lemma \ref{chief}, whence $BN+NB=0$ again. It follows that $N(A)\subseteq Ann_A(Soc(A))$. It now suffices to show that $Ann_A(Soc(A))\subseteq Zsoc(A)$.
\par

There is a subalgebra $C$ of $A$ such that $A=Zsoc(A)\dot{+}C$, by \cite[Lemma 7.2]{frat}. Now, $Ann_A(Soc(A))\cap C$ is an ideal of $A$ and so must contain a minimal ideal $D$. But $D$ must be a zero ideal, since it annihilates itself, by assumption. Hence $D\subseteq Zsoc(A)\cap C=0$ and the result is proved.
\end{proof}
\medskip

In the next results we seek to characterise the $\phi$-free bicommutative and Novikov algebras.

\begin{theor}\label{biphifree} Let $A$ be a bicommutative algebra with solvable radical $R$. Then $A$ is $\phi$-free if and only if $A=Zsoc(A)\dot{+}(D\oplus E)$, where $D$ is a zero subalgebra of $A$, $R=Zsoc(A)\dot{+}D$, $E=E^2\dot{+} U$, $E^2=S_1\oplus \ldots \oplus S_n$, $S_i$ is a simple commutative associative ideal of $E$, $U^2=0$, either $S_iU=0$ or $US_i=0$ for $1\leq i\leq n$, and $Zsoc(A)=Z_1\oplus Z_2$ where $Z_1R=0$, $RZ_2=0$.
\end{theor}
\begin{proof} Suppose first that $A$ is $\phi$-free. We have that $A=Zsoc(A)\dot{+}C$, by Corollary \ref{csplit}. Now $(C\cap R)C\subseteq RA\cap C\subseteq N(A)\cap C=0$ and $C(C\cap R)\subseteq AR\cap C\subseteq N(A)\cap C=0$, by Corollary \ref{ar}. Put $C\cap R=D$. If $R(C)$ is the solvable radical of $C$, then $Zsoc(A)+R(C)$ is a solvable ideal of $A$, and so $R(C)=R\cap C=D$ and $D^2=0$.
\par

Clearly, $\phi(C)\subseteq R(C)=D$, by Corollary \ref{biphi}, so $C\phi(C)=\phi(C)C=0$. Moreover, for every minimal zero ideal $Z$, 
\begin{align*}
(Z\phi(C))A&=(Z\phi(C))C\subseteq (ZC)\phi(C)\subseteq Z\phi(C) \hbox{ and} \\
A(Z\phi(C))&=C(Z\phi(C))\subseteq Z(C\phi(C))\subseteq Z\phi(C).
\end{align*} Thus, $Z\phi(C)$ is an ideal of $A$, whence  $Z\phi(C)=0$ or $Z\phi(C)=Z$. Suppose the latter holds. Then $ZR\neq 0$, so $RZ=0$, by Theorem \ref{min1}. It follows that $\phi(C)Z=0$. But $\phi(C)$ and $Z$ are in $A^2$, which is commutative, so we have a contradiction. Hence, $Z\phi(C)=0$. Similarly, $\phi(C)Z=0$, and so $\phi(C)$ is an ideal of $A$. This implies that $\phi(C)\subseteq \phi(A)=0$, by \cite[Lemma 4.1]{frat}.
\par

Since $D$ is a zero ideal of $C$, $C= D\dot{+} E$ where $E$ is a semisimple subalgebra of $C$, by \cite[Lemma 7.2]{frat}. Moreover, $DE+ED\subseteq (RA\cap C)+(AR\cap C)=0$, as in the first paragraph of the proof, so $C=D\oplus E$. Hence $E=E^2\dot{+} U$ where $E^2=S_1\oplus \ldots \oplus S_n$, the $S_i$ are simple ideals of $E$, $U^2=0$ and either $S_iU=0$ or $US_i=0$ for each $1\leq i \leq n$, by Corollary \ref{biss}.
\par

Let $Z$ be a minimal ideal in $Zsoc(A)$. Then $ZR=0$ or $ZR=Z$. Suppose the latter holds. Then
\[ RZ=R(ZR)\subseteq ZR^2=0,
\] by Theorem \ref{min1}. This completes the proof of necessity.
\par

The converse follows from Corollary \ref{csplit}.
\end{proof}

\begin{theor}\label{phifree}  Let $A$ be a $\phi$-free Novikov algebra with solvable radical $R$. Then $A=Zsoc(A)\dot{+}C$, where $C$ is a subalgebra of $A$ and $A(C\cap R)=0$.
\end{theor}
\begin{proof} We have that $A=Zsoc(A)\dot{+}C$, by Corollary \ref{csplit}. Also $C(C\cap R)\subseteq AR\cap C\subseteq Zsoc(A)\cap C=0$, by Proposition \ref{novar}. Moreover, $Zsoc(A)(C\cap R)\subseteq Zsoc(A)R$. Now, if $Z$ is a minimal zero ideal of $A$, $ZR=Z$ or $0$. But $ZR=Z$ implies that $Z=ZR=(ZR)R= \ldots =0$  since $R$ is right nilpotent and $Z\subseteq R$. Hence $A(C\cap R)=0$.
\end{proof}

\begin{coro}\label{arr} Let $A$ be a Novikov algebra with solvable radical $R$. Then $(AR)R\subseteq \phi(A)\subseteq A^2$.
\end{coro}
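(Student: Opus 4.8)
The plan is to reduce everything to the $\phi$-free case by passing to the quotient $\bar{A}=A/\phi(A)$, which is itself $\phi$-free, and then to invoke Theorem \ref{phifree}. Since $(AR)R\subseteq A^2$ is automatic and $\phi(A)\subseteq A^2$ is standard (it is already used in Corollary \ref{novphi}), the only thing requiring proof is the inclusion $(AR)R\subseteq\phi(A)$. Writing $\pi:A\to\bar{A}$ for the canonical projection, this is equivalent to showing $\pi\big((AR)R\big)=0$, and because $\pi$ is an algebra homomorphism we have $\pi\big((AR)R\big)=(\bar{A}\,\pi(R))\,\pi(R)$.

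First I would observe that $\pi(R)=(R+\phi(A))/\phi(A)$ is a solvable ideal of $\bar{A}$, being a homomorphic image of the solvable ideal $R$; hence $\pi(R)\subseteq R(\bar{A})=:\rho$. Consequently $\pi\big((AR)R\big)\subseteq(\bar{A}\rho)\rho$, and the whole problem is reduced to proving that $(\bar{A}\rho)\rho=0$ in the $\phi$-free Novikov algebra $\bar{A}$. To do this I apply Theorem \ref{phifree} to $\bar{A}$, obtaining a decomposition $\bar{A}=Zsoc(\bar{A})\dot{+}C$ with $\bar{A}(C\cap\rho)=0$. Each minimal zero ideal is a solvable ideal and so lies in the solvable radical $\rho$; hence $Zsoc(\bar{A})=\sum_i Z_i\subseteq\rho$, and the modular (Dedekind) law gives $\rho=Zsoc(\bar{A})\dot{+}(C\cap\rho)$.

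From here the computation is short. Distributing the product, $\bar{A}\rho=\bar{A}\,Zsoc(\bar{A})+\bar{A}(C\cap\rho)\subseteq Zsoc(\bar{A})$, since $Zsoc(\bar{A})$ is an ideal and the second summand vanishes by Theorem \ref{phifree}. It remains to check $Zsoc(\bar{A})\rho=0$: for each minimal zero ideal $Z$, the product $Z\rho$ is an ideal of $\bar{A}$ contained in $Z$ (by Lemma \ref{nproduct}), so $Z\rho=0$ or $Z\rho=Z$; the latter is impossible because $\rho$ is right nilpotent (a solvable Novikov algebra is right nilpotent, by Theorem \ref{nov}) and $Z\subseteq\rho$, exactly the iteration argument used in the proof of Theorem \ref{phifree}. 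Summing over the minimal zero ideals yields $Zsoc(\bar{A})\rho=0$, and therefore $(\bar{A}\rho)\rho\subseteq Zsoc(\bar{A})\rho=0$. Pulling this back through $\pi$ gives $(AR)R\subseteq\ker\pi=\phi(A)\subseteq A^2$, as required.

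The main obstacle is the bookkeeping in the $\phi$-free reduction rather than any hard algebra: one must be sure that $\pi(R)$ lands inside the radical $\rho$ of the quotient (not merely that it is solvable), and that the splitting $\bar{A}=Zsoc(\bar{A})\dot{+}C$ of Theorem \ref{phifree} is compatible with the radical, i.e.\ that $Zsoc(\bar{A})\subseteq\rho$ so that the modular law applies to give $\rho=Zsoc(\bar{A})\dot{+}(C\cap\rho)$. Once these containments are secured, the identity $\bar{A}\rho\subseteq Zsoc(\bar{A})$ together with $Zsoc(\bar{A})\rho=0$ makes the vanishing of $(\bar{A}\rho)\rho$ immediate.
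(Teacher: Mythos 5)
Your proof is correct and follows essentially the same route as the paper: reduce to the $\phi$-free quotient $A/\phi(A)$, identify its radical with (the image of) $R$, and apply Theorem \ref{phifree} to get $\bar{A}\rho\subseteq Zsoc(\bar{A})$ and $Zsoc(\bar{A})\rho=0$. You have merely spelled out the steps (the modular-law decomposition of $\rho$ and the vanishing $Z\rho=0$) that the paper's terser proof leaves implicit.
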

\begin{proof} From Theorem \ref{phifree}, we have that, if $A$ is $\phi$-free, then $AR\subseteq Zsoc(A)$, so $(AR)R=0$. Now $\phi(A/\phi(A))=0$ (see \cite[section 7]{frat}). Also, if $R(A/\phi(A))=\Gamma/\phi(A)$, then $\Gamma$ is solvable since $\phi(A)$ is solvable, so $R(A/\phi(A))=R/\phi(A)$,
\end{proof}

\begin{theor} Let $A$ be a Novikov algebra with solvable radical $R$ over a field $F$ of characteristic zero. If $R$ is nilpotent, then $A$ is $\phi$-free if and only if $A=R\dot{+} S$ where $R$ is a zero algebra and $S$ is a semisimple commutative associative algebra; that is, $S=\oplus_{i=1}^n S_i$, where $S_i$ is a field for $1\leq i\leq n$.
\end{theor}
\begin{proof} Let $A$ be $\phi$-free. Then $A=R\dot{+} S$, where $R$ is a zero algebra and $S$ is a semisimple subalgebra of $A$. Also, $S$ has the claimed form, by Proposition \ref{novss}.
\par

The converse follows from Corollary \ref{csplit}, since $R$ is an $S$-module and so is completely reducible.
\end{proof}

\begin{coro}\label{rad}  Let $A$ be a Novikov algebra with solvable radical $R$ over a field $F$ of characteristic zero. If $R$ is nilpotent, then $A$ is $\phi$-free if and only if its radical is a zero algebra.
\end{coro}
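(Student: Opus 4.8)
The plan is to read the Corollary off the preceding Theorem, so that the only real content lies in the converse. For the forward implication, suppose $A$ is $\phi$-free. Since $A$ is Novikov with nilpotent radical $R$ over a field of characteristic zero, the preceding Theorem applies and gives a decomposition $A=R\dot{+}S$ in which $R$ is a zero algebra; in particular $R^2=0$, so the radical is a zero algebra. This direction is immediate and needs no further work.

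For the converse, assume $R^2=0$. By the preceding Theorem it suffices to produce a subalgebra $S$ with $A=R\dot{+}S$, where $S$ is semisimple, commutative and associative, since the Theorem then yields at once that $A$ is $\phi$-free. The quotient $A/R$ is a semisimple Novikov algebra over a field of characteristic zero, so by Proposition \ref{novss} it is a direct sum of fields, hence semisimple, commutative and associative. Thus the candidate complement $S$ is a subalgebra mapping isomorphically onto $A/R$, and the task reduces to splitting the extension of $A/R$ by $R$.

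The main obstacle is exactly this splitting, i.e. constructing the complement $S$. Because $R^2=0$, left and right multiplication by $R$ on $R$ are trivial, so the $A$-module structure on $R$ factors through $A/R$; as the ground field has characteristic zero and $A/R$ is a direct sum of fields, $R$ is a completely reducible $A/R$-module and decomposes as a direct sum of minimal ideals of $A$, each of which is a minimal zero ideal. This is precisely where characteristic zero and the semisimplicity of $A/R$ enter essentially, and it is a Wedderburn-principal-theorem (Levi-type) statement for Novikov algebras with abelian radical. I would prove it by induction on $\dim R$: if $R\neq 0$, choose a minimal ideal $Z\subseteq R$, observe that $A/Z$ again has abelian radical $R/Z$ and split it by induction to obtain a subalgebra $S'\supseteq Z$ with $R\cap S'=Z$, $R+S'=A$ and $S'/Z\cong A/R$; splitting $S'=Z\dot{+}S$ then gives $R\cap S=0$ and $R+S=A$, so $A=R\dot{+}S$. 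This reduces the whole matter to the irreducible case $R=Z$ a single minimal zero ideal with $A/Z$ a direct sum of fields, which is the genuine core and is handled by complete reducibility (equivalently, by the vanishing of the relevant second cohomology in characteristic zero).

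Once the decomposition $A=R\dot{+}S$ is available, the preceding Theorem finishes the proof. As a cross-check, and as an alternative to invoking that Theorem directly, one can verify the hypotheses of Theorem \ref{split}: since $A/R$ is a direct sum of fields its Frattini ideal is $0$ (being an ideal, it is a sum of some of the simple summands, each of which is avoided by a suitable maximal subalgebra), so the standard Frattini inequality for quotients gives $(\phi(A)+R)/R\subseteq\phi(A/R)=0$, whence $\phi(A)\subseteq R$ and $\phi(A)^2\subseteq R^2=0$; thus $\phi(A)$ is nilpotent and Theorem \ref{split} reduces $\phi$-freeness to splitting over the zero socle $Zsoc(A)=R$, which is again the Levi-type splitting above. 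Either route isolates the same single hard input, the decomposition $A=R\dot{+}S$, with everything else being routine bookkeeping.
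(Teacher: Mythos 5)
Your forward implication is fine and is exactly how the paper reads this off the preceding Theorem; your inductive reduction of the converse to the case of a single minimal zero ideal is also correct bookkeeping. The genuine gap is the base case, which you yourself flag as ``the genuine core'' but then dispose of with ``complete reducibility (equivalently, by the vanishing of the relevant second cohomology in characteristic zero).'' These are not equivalent: complete reducibility of $R$ as an $A/R$-module is a statement about submodules of $R$, whereas producing the complement $S$ requires splitting the algebra extension $0\to R\to A\to A/R\to 0$, i.e.\ the vanishing of a second cohomology group --- a Wedderburn-principal/Levi-type theorem for Novikov algebras. Nothing in the paper supplies this (the lemma from \cite{frat} only gives a subalgebra $C$ with $A=R+C$ and $R\cap C\subseteq\phi(C)$, which does not force $R\cap C=0$), and you do not prove it either; asserting the cohomology vanishes is assuming exactly what has to be shown.

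Worse, the assertion is false, so the gap cannot be repaired: the converse of the Corollary itself fails. Take $A=Fe+Fr$ over $F=\mathbb{Q}$ with $ee=e+r$, $re=r$, $er=rr=0$. Checking the identities on the basis shows $A$ is left symmetric and right commutative, hence Novikov. The subspace $Fr$ is a solvable ideal, while $A^2=A$ so $A$ is not solvable; hence $R=Fr$ and $R^2=0$, so the radical is a nilpotent zero algebra. But if $v=\alpha e+\beta r$ with $\alpha\neq 0$ then $v^2=\alpha^2e+(\alpha^2+\alpha\beta)r\notin Fv$, so $Fr$ is the only one-dimensional subalgebra, hence the unique maximal subalgebra, and $\phi(A)=Fr\neq 0$. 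Thus $A$ has a zero-algebra radical but is not $\phi$-free, no complement $S$ to $R$ exists, and the relevant cohomology class is nonzero. This shows the difficulty is not merely with your write-up: the ``if'' direction of the Corollary as printed (and Corollary \ref{rad1}, which relies on it --- here $\phi(A)=Fr\not\subseteq R^2=0$) requires an additional hypothesis such as the existence of the decomposition $A=R\dot{+}S$ assumed in the preceding Theorem.
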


\begin{coro}\label{rad1}  Let $A$ be a Novikov algebra with solvable radical $R$ over a field $F$ of characteristic zero. Then $\phi(A)\subseteq R^2$ and so $\phi(A)$ is nilpotent..
\end{coro}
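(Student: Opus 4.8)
The plan is to pass to the quotient $A/R^2$ and reduce to the $\phi$-free situation already settled in Corollary \ref{rad}. First I would record that $R^2$ is an ideal of $A$ (by Lemma \ref{nproduct}) and that it is nilpotent: since $R$ is a solvable Novikov algebra, $R^2$ is nilpotent by Theorem \ref{nov}. Granting this, it suffices to prove the single inclusion $\phi(A)\subseteq R^2$; the nilpotency of $\phi(A)$ then follows at once, because any product of $n$ elements of $\phi(A)$ lies in $(R^2)^{[n]}$, which vanishes for large $n$.

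Next I would identify the solvable radical of $A/R^2$. Since $R^2$ is nilpotent, hence solvable, every solvable ideal of $A/R^2$ lifts to a solvable ideal of $A$, exactly as in the closing argument of the proof of Corollary \ref{arr}; consequently the solvable radical of $A/R^2$ is $R/R^2$. The crucial observation is that $(R/R^2)^2=R^2/R^2=0$, so this radical is a zero algebra, and in particular nilpotent. I can therefore apply Corollary \ref{rad} to $A/R^2$: its radical is nilpotent and is a zero algebra, so $A/R^2$ is $\phi$-free, that is, $\phi(A/R^2)=0$. Finally, the monotonicity of the Frattini ideal under quotients, $(\phi(A)+R^2)/R^2\subseteq \phi(A/R^2)$ (the same inclusion from \cite[Proposition 4.3]{frat} used in the proof of Proposition \ref{phi}), forces $\phi(A)+R^2=R^2$, i.e. $\phi(A)\subseteq R^2$, as required.

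The one point requiring care is the computation of the radical of $A/R^2$ and the verification that it is genuinely a zero algebra, since it is this that lets me invoke Corollary \ref{rad} unconditionally, without imposing any nilpotency hypothesis on $R$ itself. One must also be sure to use \cite[Proposition 4.3]{frat} in the direction $(\phi(A)+R^2)/R^2\subseteq \phi(A/R^2)$ rather than the reverse; this is precisely the inclusion available, and combined with $\phi(A/R^2)=0$ it yields the conclusion. Beyond these structural observations no delicate calculation is involved.
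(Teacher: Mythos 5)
Your proposal is correct and follows essentially the same route as the paper: pass to $A/R^2$, observe that its solvable radical $R/R^2$ is a zero algebra so that Corollary \ref{rad} gives $\phi(A/R^2)=0$, pull back to get $\phi(A)\subseteq R^2$, and deduce nilpotency of $\phi(A)$ from the nilpotency of $R^2$ (Theorem \ref{nov} applied to the solvable algebra $R$). The only cosmetic difference is that the paper cites \cite[Corollary 4.4]{frat} for the pull-back step where you use the inclusion from \cite[Proposition 4.3]{frat}; these deliver the same conclusion, and your explicit verification that the radical of $A/R^2$ equals $R/R^2$ is a point the paper leaves implicit.
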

\begin{proof} The radical of $A/R^2$ is a zero algebra, and so $\phi(A/R^2)=0$, by Corollary \ref{rad}. It follows from \cite[Corollary 4.4]{frat} that $\phi(A)\subseteq R^2$. Also, $\phi(A)$ is nilpotent, by Theorem \ref{nov}.
\end{proof}

\begin{coro}  Let $A$ be a Novikov algebra with solvable radical $R$ over a field $F$ of characteristic zero. If $R$ is nilpotent, then $\phi(A)=R^2$.
\end{coro}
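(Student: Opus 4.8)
The plan is to use the inclusion already available and then pass to the quotient $A/\phi(A)$ to obtain the reverse inclusion. By Corollary \ref{rad1} we already have $\phi(A)\subseteq R^2$ (and $\phi(A)$ nilpotent), so it remains only to establish $R^2\subseteq \phi(A)$.

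For the reverse inclusion I would consider $\bar A = A/\phi(A)$. This is again a Novikov algebra over the same characteristic-zero field $F$, and it is $\phi$-free since $\phi(A/\phi(A))=0$. Exactly as in the proof of Corollary \ref{arr}, its solvable radical is $\bar R = R/\phi(A)$: since $\phi(A)\subseteq R^2\subseteq R$ the quotient makes sense, and writing $R(\bar A)=\Gamma/\phi(A)$, the ideal $\Gamma$ is solvable because both $\phi(A)$ and $\Gamma/\phi(A)$ are, whence $\Gamma\subseteq R$, and equality follows since $R/\phi(A)$ is itself a solvable ideal of $\bar A$. Moreover $\bar R$ is nilpotent, being a homomorphic image of the nilpotent algebra $R$.

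Now I would apply Corollary \ref{rad} to $\bar A$. Since $\bar A$ is $\phi$-free with nilpotent radical $\bar R$, that corollary forces $\bar R$ to be a zero algebra, i.e. $\bar R^2 = (R^2+\phi(A))/\phi(A)=0$. Hence $R^2\subseteq \phi(A)$, and combining this with $\phi(A)\subseteq R^2$ gives $\phi(A)=R^2$. The argument is essentially formal once the two corollaries are in place; the only point needing care — and it is already handled in the proof of Corollary \ref{arr} — is the verification that the radical of $A/\phi(A)$ is precisely $R/\phi(A)$ rather than something larger, which rests on $\phi(A)$ being solvable. I do not expect any genuine obstacle beyond this bookkeeping.
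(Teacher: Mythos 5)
Your argument is correct, but the reverse inclusion is obtained by a genuinely different route from the paper's. The paper's proof is a one-liner: since $R$ is nilpotent, $\phi(R)=R^2$ by \cite[Theorem 6]{gen}, and $R^2=\phi(R)\subseteq \phi(A)$ by \cite[Lemma 4.1]{frat} (the application is legitimate because $R^2$ is an ideal of $A$ by Lemma \ref{nproduct}). You instead pass to $\bar A=A/\phi(A)$, verify that it is $\phi$-free with nilpotent radical $R/\phi(A)$, and invoke Corollary \ref{rad} to force $(R/\phi(A))^2=0$, i.e.\ $R^2\subseteq\phi(A)$. Your verification that $R(\bar A)=R/\phi(A)$ is exactly the bookkeeping done in Corollary \ref{arr} and is sound, as is the observation that a quotient of a nilpotent algebra is nilpotent. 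What your route buys is self-containment: it uses only corollaries already proved in this section, rather than the two external citations; what the paper's route buys is brevity and the conceptually cleaner statement that $R^2$ lies in $\phi(A)$ simply because it is the Frattini ideal of the nilpotent ideal $R$. Both establish $\phi(A)\subseteq R^2$ identically via Corollary \ref{rad1}, so the proofs differ only in this second half.
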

\begin{proof} We have that $\phi(A)\subseteq R^2$ from Corollary \ref{rad1}. But $R^2=\phi(R)\subseteq \phi(A)$, by \cite[Theorem 6]{gen} and \cite[Lemma 4.1]{frat}.
\end{proof}

\begin{lemma}\label{a3} If $A$ be an algebra with $A^3=0$, then $A$ is a Novikov algebra if and only if it is bicommutative.
\end{lemma}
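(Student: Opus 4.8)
The plan is to observe that the hypothesis $A^3=0$ collapses the one-sided axioms of both classes to a single common identity. First I would record what $A^3=0$ actually says: since $A^3=A^2A$ is spanned by elements of the form $(xy)z$, the condition $A^3=0$ is equivalent to $(xy)z=0$ for all $x,y,z\in A$. Note that this annihilates only the right-normed triple products; products of the form $x(yz)$, which lie in $^3A$, need not vanish, and it is precisely this asymmetry that makes the argument work.

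Next I would check that right commutativity is automatic under this hypothesis. The right-commutative identity $(xy)z=(xz)y$ has both sides of the form (a product) times $z$ (respectively $y$), so both are zero when $A^3=0$; hence every algebra with $A^3=0$ is right commutative. Since right commutativity is one of the two defining axioms of each of the two classes in question, the equivalence reduces to comparing the remaining axiom in each case.

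The key step is to evaluate the two remaining axioms under $(xy)z=0$. For a Novikov algebra the outstanding requirement is left symmetry, $(xy)z-x(yz)=(yx)z-y(xz)$; applying $(xy)z=(yx)z=0$ this reduces to $x(yz)=y(xz)$. For a bicommutative algebra the outstanding requirement is, by definition, left commutativity, $x(yz)=y(xz)$. Thus, given $A^3=0$, both ``$A$ is Novikov'' and ``$A$ is bicommutative'' are equivalent to the single identity $x(yz)=y(xz)$, and the lemma follows. There is no genuine obstacle here; the only point requiring care is to read $A^3=0$ as killing $(xy)z$ rather than $x(yz)$, which is exactly what renders right commutativity vacuous while turning left symmetry into left commutativity.
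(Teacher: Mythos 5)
Your proof is correct and takes essentially the same approach as the paper, whose entire argument is the one-line observation that substituting $A^3=0$ into left symmetry yields left commutativity. You simply spell out the (correct) supporting details: that $A^3=A^2A=0$ kills exactly the products $(xy)z$, making right commutativity vacuous and reducing left symmetry to $x(yz)=y(xz)$.
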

\begin{proof} Substituting $A^3=0$ into left symmetry yields left commutativity.
\end{proof}
\medskip

Finally, we have two results concerning maximal subalgebras.

\begin{theor}\label{novmax} Let $A$ be a Novikov algebra  over a field  $F$, and consider the following statements. 
\begin{itemize}
\item[(i)] $A$ is right nilpotent; 
\item[(ii)] $A^3\subseteq \phi(A)$; 
\item[(iii)] all maximal subalgebras of $A$ are left ideals of $A$.
\end{itemize}
Then $(i)\Rightarrow (ii)$, $(ii)\Rightarrow (iii)$.  
\end{theor}
\begin{proof} $(i)\Rightarrow (ii)$: This follows from Theorem \ref{nov} and Corollary \ref{arr}.
\par
\noindent $(ii)\Rightarrow (iii)$: This follows from Lemma \ref{a3} and Proposition \ref{bimax}.
\end{proof}

\begin{theor} Let $A$ be a solvable bicommutative algebra. Then the following are equivalent.
\begin{itemize}
\item[(i)] $A^3\subseteq \phi(A)$;
\item[(ii)] all maximal subalgebras of $A$ are left ideals of $A$.
\end{itemize}
\end{theor}
\begin{proof} $(i)\Rightarrow (ii)$: This follows from Lemma \ref{a3} and Theorem \ref{novmax}.
\par

\noindent $(ii)\Rightarrow (i)$: Suppose first that $A$ is $\phi$-free. Then $A=\oplus_{i=1}^nZ_i\dot{+} D$ where $Z_i$ is a minimal zero ideal of $A$ and $D^2=0$, by Theorem \ref{biphifree}. Suppose that $Z_kD=Z_k$ for some $1\leq k\leq n$. Then $M_k=Z_1\oplus \ldots \oplus \hat{Z_k}\oplus \ldots \oplus Z_n+D$, where the hat indicates a term that is missing from the sum, is a maximal subalgebra of $A$. Since it is a left ideal of $A$, we have that $Z_kM_k=Z_kD\subseteq Z_k\cap M_k=0$. Hence $A^3=A^2A=(\oplus_{i=1}^nZ_i)A=0$. The result now follows by considering $A/\phi(A)$.
\end{proof}
\medskip

{\bf Acknowledgement}: The author is grateful to the referee for their careful reading of the paper and for their helpful, detailed suggestions.


\begin{thebibliography}{1}
\bibitem{akk} K. Abdurasulov, I. Katgorodov and A. Khudoyberdiyev, `The algebraic classification of nilpotent Bicommutative algebras', {\it Mathematics} {\bf 11 (3)}, (2023),777; https://doi.org/10.3390/math11030777.
\bibitem{bn} A.A. Balinskii and S.P.Novikov, 'Poisson brackets of hydrodynamic type, Frobenius algebras and Lie algebras (Russian)', {\it Dokl. Akad. Nauk SSSR} {\bf 283 (5)} (1985), 1036-1039; translated in {\it Sov. Math. Dokl.} {\bf 32} (1985), 228-231.
\bibitem{burde} D.Burde, `Left-symmetric algebras, or pre-Lie algebras in geometry and physics', {\it Cent. Eur. J. Math.} {\bf 4 (3)} (2006), 323-357.
\bibitem{bg} D. Burde and W. de Graaf, `Classification of Novikov algebras', {\it Applied Algebra in Engineering, Communications and Computing} {\bf 24} (2013), 1-15.
\bibitem{bdd} D. Burde, K. Dekimpe and S. Deschamps, `LR-algebras', in C.S. Gordon et al. (Eds), New developments in Lie theory and geometry, {\it Contemporary Mathematics} {\bf 401} (2009), 125-140.
\bibitem{cay} A. Cayley, `On the theory of analytic forms called trees', {\it Philosophical Magazine} {\bf 13} (1857), 19-30; in {\it Mathematical Papers}, University Press, Cambridge, UK 1890, Vol. 3, 242-246.
\bibitem{ckm} K.S. Chang, H. Kim and H.C. Myung, `On radicals of a left-symmetric algebra', {\it Comm. Alg.} {\bf 27 (7)} (1999), 3161-3175.
\bibitem{drensky} V.  Drensky, `Invariant Theory of Free Bicommutative Algebras' (2023), In: Albuquerque, H., Brox, J., Martinez, C., Saraiva, P. (eds) Non-Associative Algebras and Related Topics. NAART 2020. Springer Proceedings in Mathematics and Statistics, vol 427. Springer, Cham. https://doi.org/$10.1007/978-3-031-32707-0 15$.
\bibitem{dt1}  A.S. Dzhumadil'daev and K.M. Tulenbaev, `Bicommutative algebras', {\it Russian Math. Surveys} {\bf 58 (6)} (2003), 1196-1197; translated from {\it Usp. Mat. Mauk.} {\bf 58 (6)} (2003), 149-150.
\bibitem{dt} A.S. Dzhumadil'daev and K.M. Tulenbaev, `Engel Theorem for Novikov algebras', {\it Comm. Alg.} {\bf 34 (3)} (2006), 883-888.
\bibitem{gd} I.M. Gelfand and I.Ya. Dorfman, `Hamiltonian operators and algebraic structures related to them (Russian)', {\it Funktsional Anal. i Prilozhen} {\bf 13 (4)} (1979), 13-30; translated in {\it Funct. Anal Appl.} {\bf 13} (1980), 248-262.
\bibitem{kkk} I. Karimjanov, I. Kaygorodov and A. Khudoyberdiyev, `The algebraic and geometric classification of nilpotent Novikov algebras', {\it J. Geom. and Phys.},  {\bf 143} (2019), 11-21.
\bibitem{kpv1} I. Kaygorodov, P. P\'{a}ez-Guill\'{a}n and V. Voronin, `The algebraic and geometric classification of nilpotent bicommutative algebras', {\it Alg. Rep. Theory} {\bf 23} (2020), 2331-2347.
\bibitem{kpv2} I. Kaygorodov, P. P\'{a}ez-Guill\'{a}n and V. Voronin, `One-generated nilpotent bicommutative algebras', {\it Alg. Colloq.} {\bf 29} (2022), 453-474.
\bibitem{klein} E. Kleinfeld, `Assosymmetric rings', {\it Proc. Amer. Math. Soc.} {\bf 8} (1957), 983–986.
\bibitem{klein2} E. Kleinfeld, `A class of rings which are nearly associative', {\it Am. Math. Monthly} (1986) {\bf 93}, 720-722.
\bibitem{pr} D. Pokrass  and D. Rodabaugh, `Solvable assosymmetric  rings are nilpotent', {\it Proc. Amer. Math. Soc.} {\bf 64} (1977), 30-34. 
\bibitem{sz} I. Shestakov and Z. Zhang, `Solvability and nilpotency of Novikov algebras', {\it Comm. Alg.}{\bf 48 (12)} (2020), 5412-5420.
\bibitem{gen} D.A. Towers, `On the generators of a nilpotent nonassociative algebra', {\it Quart. J. Math. Oxford Ser.} {\bf 22} (1971), 545-550.
\bibitem{frat} D.A. Towers, `A Frattini theory for algebras', {\it Proc. London Math. Soc.} {\bf 27} (1973), 440-462.
\bibitem{zel} E. Zelmanov, `On a class of local translation invariant Lie algebras', {\it Sov. Math. Dokl.} {\bf 35 (6)}, 216-218.

\end{thebibliography}
\end{document}